\newtheorem{thm}{Theorem}[section]
\newtheorem{deff}[thm]{Definition}
\newtheorem{lem}[thm]{Lemma}
\newtheorem{cor}[thm]{Corollary}
\newtheorem{rem}[thm]{Remark}
\newtheorem{prob}[thm]{Problem}
\newcommand{\R}{{\Bbb R}}
\begin{document}

 \title[Non-central sections of convex bodies]{Non-central sections of convex bodies}

 \author{V.~Yaskin and N.~Zhang}

  \address{Vladyslav Yaskin, Department of Mathematical and Statistical Sciences, University of Alberta, Edmonton, Alberta, T6G 2G1, Canada}
 \email{yaskin@ualberta.ca}
 \address{Ning Zhang, Department of Mathematical and Statistical Sciences, University of Alberta, Edmonton, Alberta, T6G 2G1, Canada}
 \email{nzhang2@ualberta.ca}

  \subjclass[2010]{52A20, 52A38}

\keywords{Convex body, section, unique determination, Fourier transform}

\thanks{Both authors are partially supported by a grant from NSERC}

 \begin{abstract} We study the following open problem, suggested by Barker and Larman. Let $K$ and $L$ be convex bodies in $\mathbb R^n$ ($n\ge 2$) that contain a Euclidean ball $B$ in their interiors. If $\mathrm{vol}_{n-1}(K\cap H) = \mathrm{vol}_{n-1}(L\cap H)$ for every hyperplane $H$ that supports $B$, does it follow that $K=L$? We discuss various modifications of this problem. In particular, we show that in $\mathbb R^2$ the answer is positive if the above condition is true for two disks, none of which is contained in the other. We also study some higher dimensional analogues.

\end{abstract}

\maketitle

\section{Introduction}
Geometric Tomography is an area of Mathematics that deals with the study of properties of objects (such as convex bodies or star bodies) based on information about the size of their sections, projections, etc.  It is a well-known result, which goes back to Minkowski and Funk (see \cite{Ga}), that an origin-symmetric star body in $\mathbb R^n$ is uniquely determined by the areas of its central sections. More precisely, if $K$ and $L$ are origin-symmetric star bodies in $\mathbb R^n$ such that $$\mathrm{vol}_{n-1} (K\cap H) = \mathrm{vol}_{n-1} (L\cap H)$$
for every hyperplane $H$ passing through the origin, then $K=L$. On the other hand, in the class of general (not necessarily symmetric) star bodies the latter result is not true.

In view of this, it is natural to ask what information is needed to determine non-symmetric bodies. Falconer \cite{F} and Gardner \cite{Ga1} have shown that if $K$ and $L$ are convex bodies in $\mathbb R^n$ that contain two points $p$ and $q$ in their interiors and such that $\mathrm{vol}_{n-1} (K\cap H) = \mathrm{vol}_{n-1} (L\cap H)$ for every hyperplane $H$ that passes through either  $p$ or $q$, then $K=L$. In this context, let us also mention the problem of Klee about  the inner section function of convex bodies, which is given by $m_K(u) = \max_{t\in \mathbb R} \mathrm{vol}_{n-1} (K\cap \{u^\perp+tu\})$. In 1969 Klee asked whether the knowledge of $m_K$ is sufficient to determine the body $K$ uniquely. In \cite{GRYZ} the problem was solved in the negative, and a little later a nonspherical body with a constant inner section function was constructed in \cite{NRZ}.

Recently, a lot of attention has been attracted to the following problem, posed by Barker and Larman in \cite{BL}. Note that  a similar question  on the sphere was considered earlier by Santal\'{o} \cite{Sa}.

 \begin{prob}\label{BL1} Let $K$ and $L$ be convex bodies in $\mathbb R^n$ ($n\ge 2$) that contain a Euclidean ball $B$ in their interiors. If $\mathrm{vol}_{n-1}(K\cap H) = \mathrm{vol}_{n-1}(L\cap H)$ for every hyperplane $H$ that supports $B$, does it follow that $K=L$?
 \end{prob}
The problem is open even in $\mathbb R^2$. Some particular cases are known to be true. In particular,   a body $K$ in $\mathbb R^2$ all of whose sections by lines supporting a disk have the same length,   must itself be a disk; see \cite{BL}. The problem also has a positive answer in the class of convex polytopes in $\mathbb R^n$; see \cite{Y}.

Barker and Larman also suggested a more general version of Problem \ref{BL1}.

 \begin{prob}\label{BL2} Let $K$ and $L$ be convex bodies in $\mathbb R^n$ ($n\ge 2$) that contain a convex body $D$ in their interiors. If $\mathrm{vol}_{n-1}(K\cap H) = \mathrm{vol}_{n-1}(L\cap H)$ for every hyperplane $H$ that supports $D$, does it follow that $K=L$?
 \end{prob}

In this paper we study the following  modification of  Problem \ref{BL2}.

 \begin{prob}\label{YZ1} Let $K$ and $L$ be convex bodies in $\mathbb R^n$ ($n\ge 2$) that contain two convex bodies $D_1$ and $D_2$ in their interiors. If $\mathrm{vol}_{n-1}(K\cap H) = \mathrm{vol}_{n-1}(L\cap H)$ for every hyperplane $H$ that supports either $D_1$ or $D_2$, does it follow that $K=L$?
 \end{prob}
We show that the problem has a positive answer in $\mathbb R^2$ under some mild assumptions on $D_1$ and $D_2$.
We also discuss the following closely related problem.

\begin{prob}\label{YZ2}
Let $K$ an $L$ be convex bodies in $\mathbb R^n$ and let $D$ be a convex body in   the interior of $K\cap L$.     If $ \textnormal{vol}_n(K\cap H^+) =  \textnormal{vol}_n(L\cap H^+)$ for every hyperplane $H$ supporting $D$, does it follow that $K=L$? Here, $H^+$ is the half-space bounded by the hyperplane $H$ that does not intersect the interior of $D$.
\end{prob}

Again, we solve a two-dimensional modification of this problem by taking two bodies $D_1$ and $D_2$ in  the interior of $K\cap L$.

We also discuss some higher-dimensional analogues. In particular, Groemer \cite{Gr} has shown that convex bodies are uniquely determined by the areas of ``half-sections". More precisely, consider half-planes of the form $H(u,w) = \{x\in \mathbb R^n: x\in u^\perp, \langle x, w\rangle \ge 0\}$, where $u\in S^{n-1}$ and $w\in S^{n-1}\cap u^\perp$. Then the equality $\mbox{vol}_{n-1}(K\cap H(u,w)) = \mbox{vol}_{n-1}(L\cap H(u,w))$ for all such half-planes implies that $K=L$.
We give a version of this result for half-planes that do not pass through the origin. Some other types of sections are also discussed.

\section{Definitions  and preliminaries}

In this section we collect some basic concepts and definitions that we use in the paper. For further facts in Convex Geometry and Geometric Tomography the reader is referred to the books by Schneider \cite{Sch} and Gardner \cite{Ga}.

 A set  in $\mathbb{R}^n$ is called {\it convex} if it contains the closed line segment joining any two of its points. A convex set is a {\it convex body} if it is compact and has non-empty interior.  A convex body is {\it strictly convex} if its boundary contains no line segments.

A hyperplane $H$ {\it supports} a set $E$ at a point $x$ if $x\in E\cap H$ and $E$ is contained in one of the two closed half-spaces bounded by $H$.
We say $H$ is a {\it supporting hyperplane} of $E$ if $H$ supports $E$ at some point.

The {\it support function} of $K$ is defined by
$$
h_K(x)=\max\{\langle x, y\rangle: y\in K\},
$$
for $x\in \mathbb{R}^n$.
 If $h_K$ is of class $C^k$ on $\R^n\backslash \{O\}$, we will simply say that  $K$ has  a  $C^k$ support function. For a convex body $K\subset \mathbb R^2$ it is often convenient to write $h_K$ as a function of the polar angle $\theta$. So, abusing notation, we will use $h_K(\theta)$ to denote $h_K((\cos\theta,\sin\theta))$. If $H$ is the supporting line to $K\subset \mathbb R^2$ with the outer normal vector $(\cos\theta,\sin\theta)$, and $K$ has a $C^1$ support function,  then $K$ has a unique point of contact with $H$, and $|h_K^{'}(\theta)|$ is the distance from this point  to the foot of the perpendicular from the origin $O$ to $H$; see \cite[p.~24]{Ga}.

A compact set $L$ is called a {\it star body} if the origin $O$ is an interior point of $L$, every line through $O$ meets $L$ in a line segment, and its {\it Minkowski functional} defined by
$$\|x\|_L = \min\{a\ge 0: x \in aL\}$$ is a continuous function on $\mathbb R^n$.

The {\it radial function} of $L$ is given by $\rho_L (x) = \|x\|_L^{-1}$, for $x\in \mathbb{R}^n\backslash \{O\}$. If $x\in S^{n-1}$, then $\rho_L(x)$ is just the radius of $L$ in the direction of $x$.
If $p$ is a point in the interior of $L$, and $L- p$ is a star body, then we will use $\rho_{L,p}$ to denote $\rho_{L-p}$.

Let
$K$ be a convex body in $\mathbb{R}^n$, and $D$ be a strictly convex body in the interior of $K$. Let $H$ be a supporting plane to $D$ with outer unit normal $\xi$, and $p=D\cap H$ be the corresponding point of contact.  If $u\in S^{n-1}\cap \xi^\perp$, we denote by  $\rho_{K,D}(u,\xi) = \rho_{K,p}(u)$   the radial function of $K\cap H$ with respect to $p$.

{Let $\mathcal S(\mathbb R^n)$ be the Schwartz space of   infinitely differentiable
rapidly decreasing functions on $\mathbb R^n$. Functions from this space are called test functions. For a function $\psi\in \mathcal S(\mathbb{R}^n)$,   its {\it Fourier transform} is defined by
$$
\hat{\psi}(\xi)=\int_{\mathbb{R}^n}\psi(x)e^{-i\langle x,\xi\rangle}\,dx,\ \xi\in \mathbb{R}^n.
$$
\\
By $\mathcal S'(\mathbb R^n)$ we denote the space of continuous linear functionals on $\mathcal S(\mathbb R^n)$. Elements of this space are referred to as distributions. By $\langle f, \psi\rangle$ we denote the action of the distribution $f$ on the test function $\psi$. Note that $\hat\psi$ is also a test function, which allows to introduce the following definition. We say that the distribution $\hat f$  is {\it the Fourier transform of the distribution $f$}
if
$$
\langle \hat{f},\psi\rangle=\langle f,\hat{\psi}\rangle,
$$
for every test function $\psi$.
The reader is referred to the book \cite{K} for applications of Fourier transforms to the study of convex bodies.
}

\section{Main results: 2-dimensional cases.}

 We will start with the following definition. We say that convex bodies $D_1$ and $D_2$ in $\mathbb R^2$ are {\it admissible} if they have $C^2$ support functions, $D_1\cup D_2$ is not convex, and there are only two   lines that support both $D_1$ and $D_2$ and do not separate $D_1$ and $D_2$. The last condition is satisfied, when, for example, the bodies $D_1$ and $D_2$ are disjoint, or they touch each other, or they overlap, but their boundaries have only two common points.

\begin{center}
\includegraphics[scale=0.40]{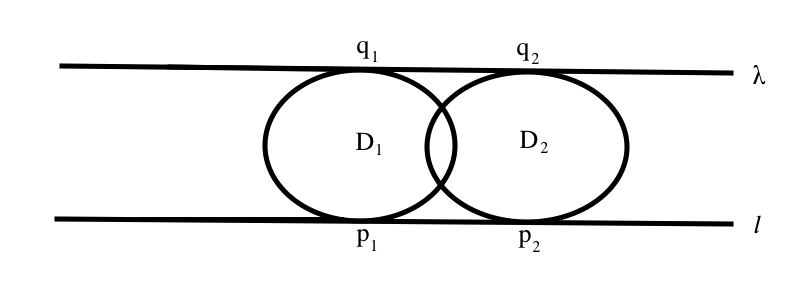}\\ Figure 1
\end{center}
\begin{center}
\includegraphics[scale=0.40]{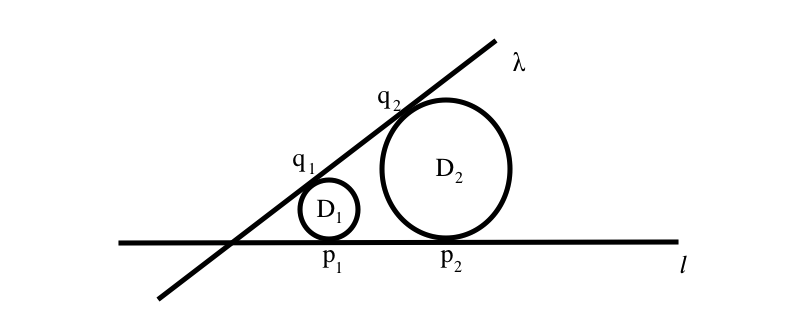}\\
   Figure 2
\end{center}

 Figures 1 and 2 show two examples of  admissible convex bodies.
 For simplicity, the reader could just think of two disks (not necessarily of the same radius) such that none of them is contained in the other.

The following is one of the main results of this section.

\begin{thm}\label{main}
Let $K$ and $L$ be convex bodies in $\mathbb R^2$ and {let $D_1$ and $D_2$ be two admissible convex bodies in the interior of $K\cap L$.} If the chords $K\cap H$ and $L\cap H$ have equal length for all $H$ supporting either $D_1$ or $D_2$, then $K=L$.
\end{thm}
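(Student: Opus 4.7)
My plan is to adapt the classical Falconer--Gardner argument---for lines through two fixed interior points---to the setting where the two one-parameter families of lines envelop $D_1,D_2$ rather than pass through two points. After a standard mollification/Blaschke reduction to $C^2$ support functions, I parametrize $\partial K$ by the outer-normal angle $\alpha$ as $z_K(\alpha)=h_K(\alpha)\xi_\alpha+h_K'(\alpha)\xi_\alpha^\perp$, with $\xi_\alpha=(\cos\alpha,\sin\alpha)$. A line $H(\theta,t)=\{\langle x,\xi_\theta\rangle=t\}$ with $t<h_K(\theta)$ meets $\partial K$ at $z_K(\alpha_1),z_K(\alpha_2)$, where $\alpha_1<\theta<\alpha_2$ are the two roots of the implicit equation $h_K(\alpha)\cos(\alpha-\theta)-h_K'(\alpha)\sin(\alpha-\theta)=t$. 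Using $z_K'(\alpha)=(h_K+h_K'')(\alpha)\xi_\alpha^\perp$ together with an integration by parts, the chord length takes the compact form
\[
|K\cap H(\theta,t)|=\bigl[h_K(\alpha)\sin(\alpha-\theta)+h_K'(\alpha)\cos(\alpha-\theta)\bigr]_{\alpha=\alpha_1}^{\alpha=\alpha_2}.
\]

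Setting $t=h_{D_i}(\theta)$ and linearizing the chord-length functional along the path $h_s=h_L+s\eta$, $s\in[0,1]$, with $\eta=h_K-h_L$, the implicit function theorem applied to the equation defining the contact angles $\alpha_j^{s,(i)}(\theta)$ gives explicit variations in $s$, and substitution into the formula above produces, after several cancellations, the remarkably clean variational identity
\[
\frac{d}{ds}\,\bigl|L_s\cap H_i(\theta)\bigr|=\frac{\eta(\alpha_2^{s,(i)}(\theta))}{\sin(\alpha_2^{s,(i)}(\theta)-\theta)}-\frac{\eta(\alpha_1^{s,(i)}(\theta))}{\sin(\alpha_1^{s,(i)}(\theta)-\theta)}.
\]
Integrating in $s\in[0,1]$ and invoking the chord-length hypothesis yields, for each $i\in\{1,2\}$ and each $\theta\in S^1$, an identity whose linearized part ($s=0$) is a linear functional of $\eta$; my aim is to show that its kernel is trivial.

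Admissibility enters decisively. At the two outer common tangent directions $\theta_\pm$ one has $H_1(\theta_\pm)=H_2(\theta_\pm)$, so the two identities collapse into a single constraint; but on the open arcs between $\theta_+$ and $\theta_-$ the lines $H_1(\theta),H_2(\theta)$ are distinct parallels, the four contact angles $\alpha_j^{(i)}(\theta)$ are pairwise distinct, and the two identities become linearly independent. A sweeping argument starting from a neighbourhood of $\theta_\pm$, together with the fact that the maps $\theta\mapsto\alpha_j^{(i)}(\theta)$ sweep out $S^1$, should propagate the vanishing of $\eta$ globally, yielding $h_K=h_L$ and hence $K=L$. The main obstacle is precisely this last step: handling the degeneracy of the two identities at $\theta_\pm$ (where they merge into one) and lifting the local vanishing of $\eta$ near those directions to a global statement on $S^1$. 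The transversal crossing of $h_{D_1}$ and $h_{D_2}$ at $\theta_\pm$---which is exactly what admissibility (only two common support directions, $D_1\cup D_2$ not convex) guarantees---is the essential input that makes this final step work.
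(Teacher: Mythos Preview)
Your variational identity
\[
\frac{d}{ds}\bigl|L_s\cap H_i(\theta)\bigr|=\frac{\eta(\alpha_2^{s,(i)}(\theta))}{\sin(\alpha_2^{s,(i)}(\theta)-\theta)}-\frac{\eta(\alpha_1^{s,(i)}(\theta))}{\sin(\alpha_1^{s,(i)}(\theta)-\theta)}
\]
is correct and elegant, but the argument built on it has genuine gaps that are not merely details.

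\textbf{(1) Smoothing destroys the hypothesis.} You begin by mollifying $K$ and $L$ to get $C^2$ support functions so that the normal-angle parametrization $z_K(\alpha)$ makes sense. But the chord-length equality is an exact identity between $K$ and $L$; smoothing each body separately will not preserve it, and there is no ``standard'' reduction that does. The paper never assumes smoothness of $K,L$; only $D_1,D_2$ are required to have $C^2$ support functions.

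\textbf{(2) The integrated identity is not linear in $\eta$.} What the hypothesis gives, after integrating your variational formula over $s\in[0,1]$, is
\[
\int_0^1\Bigl(\frac{\eta(\alpha_2^{s,(i)}(\theta))}{\sin(\alpha_2^{s,(i)}(\theta)-\theta)}-\frac{\eta(\alpha_1^{s,(i)}(\theta))}{\sin(\alpha_1^{s,(i)}(\theta)-\theta)}\Bigr)\,ds=0.
\]
The contact angles $\alpha_j^{s,(i)}(\theta)$ themselves depend on $s$ through $h_s=h_L+s\eta$, so this is a \emph{nonlinear} integral constraint on $\eta$. Showing that the $s=0$ linearization has trivial kernel would give only infinitesimal uniqueness at $L$; it does not by itself force $\eta\equiv0$.

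\textbf{(3) The ``sweeping argument'' is the whole problem.} You explicitly flag this as the main obstacle, and it is. Even granting a linear relation of the form $\eta(\alpha_2^{(i)}(\theta))/\sin(\cdots)=\eta(\alpha_1^{(i)}(\theta))/\sin(\cdots)$ for $i=1,2$, the two chord-endpoint maps degenerate and coincide exactly at the common tangent directions $\theta_\pm$. The difficulty is not merely that the two constraints ``merge'' there; it is that the dynamics given by composing the two endpoint maps has $\theta_\pm$ as attracting fixed points, and a naive propagation of $\eta=0$ away from $\theta_\pm$ runs \emph{against} the contraction. The transversal crossing of $h_{D_1},h_{D_2}$ at $\theta_\pm$ tells you the contraction rate is bounded away from $1$, but that is an ingredient, not a proof: you still need a quantitative argument to exclude a nontrivial $\eta$ supported where the orbits accumulate.

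For comparison, the paper bypasses all three issues by working directly with the bodies (no smoothing of $K,L$), defining boundary-to-boundary maps $\varphi_1,\varphi_2$ along tangent lines to $D_1,D_2$ that preserve $\partial K\cap\partial L$, and then running a measure-theoretic argument: one introduces $\nu_i(E)=\iint_E|y|^{i-2}\,dx\,dy$ (here $i=1$) and shows, via a careful estimate using Lemma~\ref{l2.2} and the geometric contraction ratio $bd/ac<1$ coming from admissibility, that $\nu_i$ is essentially preserved under $\varphi_2^{-1}\circ\varphi_1$ near the common tangent line. If $K\neq L$ locally there, iterating produces infinitely many disjoint pieces of $K\triangle L$ each with $\nu_i$-measure bounded below, contradicting the finiteness of $\nu_i$ on a triangle. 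This is exactly the device that handles your obstacle (3): it converts the accumulation at $\theta_\pm$ into a divergent series rather than trying to propagate vanishing outward.
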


We will obtain this theorem as a particular case of a more general statement, Theorem \ref{p3.4} below. First, we will need the following lemma.

\begin{lem}\label{l2.2}
Let $D\subset \mathbb{R}^2$ be a   convex body with a $C^2$ support function. {Let $Q\in \partial D$ and $l$ be the supporting line to $D$ at $Q$. Suppose the origin $O$ is located on the line perpendicular to $l$ and passing through $Q$, and $O\ne Q$.   Consider a polar coordinate system centered at $O$ with the polar axis $\overrightarrow{OQ}$.} Then, for $\theta$ small enough, we have
\begin{equation}\label{e2.1}
h'_D(\theta)\sin \theta+ h_D(0)-h_D(\theta)\cos \theta\approx \sin^2 \theta ,
\end{equation}
where $f\approx g$ means there exist two constants {$C_1,C_2$}, such that, $C_1 g\leq f\leq C_2 g$.
\end{lem}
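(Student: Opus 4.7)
The plan is to reduce \eqref{e2.1} to a Taylor expansion of its left-hand side at $\theta = 0$ and read off the leading coefficient. Fix Cartesian coordinates with $O$ at the origin and the polar axis along the positive $x$-axis, so that $Q = (d,0)$ with $d = |OQ|$ and $l = \{x = d\}$. With $D$ on the same side of $l$ as $O$, the outer unit normal to $D$ at $Q$ is $(1,0)$, corresponding to polar angle $0$; in particular $h_D(0) = d > 0$. Since $Q$ is itself the foot of the perpendicular from $O$ to $l$, the fact recalled from \cite[p.~24]{Ga} forces $|h'_D(0)| = 0$, hence $h'_D(0) = 0$.

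Denote the left-hand side of \eqref{e2.1} by
\[
f(\theta) := h'_D(\theta)\sin\theta + h_D(0) - h_D(\theta)\cos\theta .
\]
A direct differentiation, with the two $h'_D\cos\theta$ terms cancelling, gives $f(0) = 0$ and
\[
f'(\theta) = \bigl(h''_D(\theta) + h_D(\theta)\bigr)\sin\theta,
\]
so $f'(0) = 0$ as well. Using the $C^2$ regularity of $h_D$ together with $h'_D(0) = 0$, the second-order expansions
\[
h_D(\theta) = h_D(0) + \tfrac{1}{2}h''_D(0)\theta^2 + o(\theta^2), \qquad h'_D(\theta) = h''_D(0)\theta + o(\theta),
\]
combined with $\sin\theta = \theta + o(\theta)$ and $\cos\theta = 1 - \tfrac{1}{2}\theta^2 + o(\theta^2)$, yield after a short cancellation
\[
f(\theta) = \tfrac{1}{2}\bigl(h''_D(0) + h_D(0)\bigr)\theta^2 + o(\theta^2).
\]

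Since $\sin^2\theta = \theta^2 + o(\theta^2)$, the ratio $f(\theta)/\sin^2\theta$ converges to $\tfrac{1}{2}(h''_D(0) + h_D(0))$ as $\theta \to 0$. The quantity $h''_D(0) + h_D(0)$ is the radius of curvature of $\partial D$ at $Q$, which is strictly positive in the present setting (nonnegative for any convex body with $C^2$ support function, and positive under the implicit strict-convexity/$C^2_+$ regularity that the authors are working with). Granting this, continuity of $f(\theta)/\sin^2\theta$ on a small punctured neighbourhood of $0$ furnishes the positive constants $C_1, C_2$ required by \eqref{e2.1}. The one delicate point in the plan is precisely this positivity: if the radius of curvature at $Q$ vanished, the expansion would give only $f(\theta) = o(\theta^2)$, which would not support the lower bound $C_1 \sin^2\theta \le f(\theta)$.
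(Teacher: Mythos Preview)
Your approach is essentially the paper's: both arguments Taylor-expand the left-hand side at $\theta=0$ after observing that $h_D'(0)=0$, and both identify the leading term as a constant times $\theta^2$. Your treatment is in fact a bit cleaner, since you expand the full expression $f(\theta)$ at once (and record the pleasant identity $f'(\theta)=(h_D''(\theta)+h_D(\theta))\sin\theta$), whereas the paper estimates the two summands $h_D(0)-h_D(\theta)\cos\theta$ and $h_D'(\theta)\sin\theta$ separately; those individual pieces need not each be $\approx\sin^2\theta$ (e.g.\ if $h_D''(0)=0$ or $h_D(0)=h_D''(0)$), though their sum has exactly the leading coefficient $\tfrac12(h_D(0)+h_D''(0))$ you compute.

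Your caveat about the lower bound is well taken and applies equally to the paper's argument: with only a $C^2$ support function one has $h_D''(0)+h_D(0)\ge 0$ but not necessarily $>0$, so the two-sided estimate with $C_1>0$ is not literally guaranteed. It does not matter for the application, however: in the proof of Theorem~\ref{p3.4} only the upper bound $|f(\theta)|\le C_2\sin^2\theta$ is used, to obtain $(1-C\sin\theta)r\sin\theta\le r\sin\theta+f(\theta)\le(1+C\sin\theta)r\sin\theta$ for $r$ bounded away from $0$.
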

\begin{proof}
Since $Q$ is both the point where $l$ supports $D$ and the foot of the perpendicular from $O$ to $l$, it follows that  $h'_D(0)=0$. Thus,
$$
h_D(\theta)=h_D(0) +\frac{h''_D(0)}{2}\theta^2+o(\theta^2).
$$
Therefore,   for $\theta$ small enough,  we have
\begin{eqnarray*}
&&
h_D(0)-h_D(\theta)\cos \theta\\
&&=h_D(0) - \left(h_D(0) +\frac{h''_D(0)}{2}\theta^2+o(\theta^2)\right)\left(1 -\frac12 \theta^2 + o(\theta^2) \right)\\
&& = \frac{h_D(0)-h''_D(0)}{2}\theta^2+ o(\theta^2)\\
&& \approx \sin^2\theta,
\end{eqnarray*}
and
$$
h'_D(\theta)\sin \theta = (h''_D(0)\theta+o(\theta))(\theta + o(\theta))\approx \sin^2\theta.
$$
\end{proof}

\begin{thm}\label{p3.4}
Let $K$ and $L$ be convex bodies in $\mathbb R^2$ and let $D_1$ and $D_2$ be two admissible convex bodies in the interior of $K\cap L$. Assume that for some $i>0$ one of the following two conditions holds:
\begin{enumerate}
\item[(I)] $\rho_{K,D_j}^i(u,\xi)+\rho_{K,D_j}^i(-u,\xi)=\rho_{L,D_j}^i(u,\xi)+\rho_{L,D_j}^i(-u,\xi)$, for $j=1,2$,
\item[(II)] $\partial K\cap \partial L\ne \emptyset$ and $\rho_{K,D_j}^i(u,\xi)-\rho_{K,D_j}^i(-u,\xi)=\rho_{L,D_j}^i(u,\xi)-\rho_{L,D_j}^i(-u,\xi)$, for $j=1,2$,
\end{enumerate}
for all $ \xi, u \in S^1$ such that $u\perp \xi$.

{Then $K=L$.}
\end{thm}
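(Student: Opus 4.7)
My plan is to show that $S := \partial K \cap \partial L$ is nonempty and to exhaust $\partial K$ by \emph{propagation} along tangent chords of $D_1$ and $D_2$; the admissibility of $D_1, D_2$ will guarantee that the combined dynamics is rich enough to reach all of $\partial K$.

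\textbf{Step 1 (Nonemptyness of $S$).} Under hypothesis (II) this is assumed. Under (I), I would argue that if $S=\emptyset$, then since $D_1\cup D_2\subset \mathrm{int}\,K\cap \mathrm{int}\,L$ the two bodies are nested, say $K\subset \mathrm{int}\,L$. Taking any $q\in L\setminus K$ and a tangent line $\ell$ to some $D_j$ through $q$, the chord $K\cap \ell$ would be a proper sub-chord of $L\cap \ell$, both containing the interior point $p=\ell\cap D_j$. This forces $\rho^{\pm}_{K,D_j}(\xi)\le \rho^{\pm}_{L,D_j}(\xi)$ with a strict inequality along this $\ell$, contradicting $(\rho^{+}_{K,D_j})^i+(\rho^{-}_{K,D_j})^i=(\rho^{+}_{L,D_j})^i+(\rho^{-}_{L,D_j})^i$. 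Hence $S\ne \emptyset$ in both cases.

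\textbf{Step 2 (Propagation lemma).} I would prove that if $P_0\in S$ and $\ell$ is a line tangent to some $D_j$ passing through $P_0$, then the second endpoint $P_1$ of the chord $K\cap \ell$ also lies in $S$. The point is that $P_0$ is simultaneously an endpoint of $K\cap \ell$ and of $L\cap \ell$, on the same side of the contact point $p=\ell\cap D_j$. Hence $\rho^{-}_{K,D_j}(\xi)=|P_0-p|=\rho^{-}_{L,D_j}(\xi)$, where $\xi$ is the outer unit normal of $\ell$. Substituting this equality into either (I) or (II) and cancelling the common $i$-th power term gives $\rho^{+}_{K,D_j}(\xi)=\rho^{+}_{L,D_j}(\xi)$; consequently the two chords share their second endpoint and $P_1\in S$.

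\textbf{Step 3 (Exhaustion and main obstacle).} The main difficulty is to show that iterating the four propagation maps $\sigma_j^{\pm}\colon \partial K\to \partial K$, which send a boundary point to the other endpoint of the chord $K\cap \ell$ when $\ell$ is one of the two tangents from that point to $D_j$, starting from a single point of $S$ eventually yields a dense subset of $\partial K$. A single $\sigma_j^{\pm}$ generates only a ``billiard-like'' discrete cycle around $D_j$ which need not cover $\partial K$. The admissibility hypothesis — $D_1\cup D_2$ is not convex and has exactly two non-separating common tangents — is precisely what makes the two dynamics sufficiently independent for combined orbits to be dense. I expect Lemma~\ref{l2.2} to enter here, since its quadratic control of $h'_{D_j}(\theta)\sin\theta + h_{D_j}(0)-h_{D_j}(\theta)\cos\theta$ near a common tangent direction allows one to analyze the infinitesimal behavior of $\sigma_j^{\pm}$ and to rule out proper invariant sub-arcs. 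Once $S$ is dense in $\partial K$, its closedness forces $S=\partial K$; since both bodies are convex, this yields $K=L$.
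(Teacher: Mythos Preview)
Your Steps 1 and 2 are correct and match the paper's setup: the paper also defines maps $\varphi_1,\varphi_2$ along tangent lines to $D_1,D_2$ and observes that they preserve $\partial K\cap\partial L$. The gap is in Step 3, and it is a genuine one: you are hoping for a density argument that the paper neither attempts nor needs, and which does not obviously go through.

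The combined orbit of a single point under your maps $\sigma_j^{\pm}$ is \emph{not} dense in $\partial K$. In fact the paper shows the opposite behavior: iterating $\varphi_2^{-1}\circ\varphi_1$ from any starting point produces a sequence $\{Q_j\}$ whose associated angles $\theta_j$ decrease monotonically to $0$, so the orbit \emph{converges} to a single point $X_0$ on the common tangent line $l$. Thus the discrete orbit in $S$ accumulates only at $X_0$ and cannot by itself force $S=\partial K$. What is missing from your outline is a mechanism to upgrade ``$S$ contains a sequence converging to $X_0$'' to ``$S$ contains a full one-sided arc at $X_0$''. Once you have an open arc in $S$, backward propagation under $\varphi_1,\varphi_2$ does cover all of $\partial K$ (this is the paper's Step 4, and matches your intuition).

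The paper bridges this gap with a measure-theoretic argument, not a dynamical one. It introduces the weighted measure $\nu_i(E)=\iint_E |y|^{i-2}\,dx\,dy$ (with $l$ as the $x$-axis) and considers the components $E_j$ of $K\triangle L$ lying between consecutive orbit points $Q_j,Q_{j+1}$. Lemma~\ref{l2.2} is used not to analyze the infinitesimal dynamics of $\sigma_j^{\pm}$, but to show that $\nu_i$ is \emph{approximately invariant} under $\varphi_1$ and $\varphi_2^{-1}$ near $l$: one gets $\nu_i(E_{j+1})\ge c_j\,\nu_i(E_j)$ with $\prod_j c_j>0$, because the angles $\theta_j$ decay geometrically. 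Hence if $E_0\ne\emptyset$ then $\sum_j \nu_i(E_j)=\infty$, contradicting the fact that all the $E_j$ sit inside a triangle with vertex on $l$, which has finite $\nu_i$-measure. This forces $K\triangle L=\emptyset$ on an arc near $X_0$. Your proposal identifies the right obstacle but not the right tool; the $\nu_i$-measure comparison is the essential idea you are missing.
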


\begin{proof}
{
We will present the proof of the theorem only using condition (I). The other case is similar and we will just make a brief comment on how the proof should be adjusted.
}

{\it Step 1.}
 {Since there are two common supporting lines to $D_1$ and $D_2$ (that do not separate $D_1$ and $D_2$), we will denote them by $l$ and $\lambda$, and let $p_1 = D_1\cap l$, $q_1 = D_1\cap \lambda$, $p_2 = D_2\cap l$, $q_2 = D_2\cap \lambda$; see Figures 1 and 2.  We claim that at least one of the (possibly degenerate) segments $[p_1,p_2]$ or $[q_1,q_2]$ is not entirely contained   in $D_1\cup D_2$. We will prove this claim in a slightly more general setting, i.e. without   the assumption that $D_1$ and $D_2$ are strictly convex. In that case, instead of single points of contact we may have intervals, and $[p_1,p_2]$ or $[q_1,q_2]$ will just stand for the convex hulls of  the corresponding support  sets. To prove the claim, we will argue by contradiction. Assume that $[p_1,p_2]$ and $[q_1,q_2]$ are contained in $D_1\cup D_2$. Then there are points $p\in [p_1,p_2]$ and $q\in [q_1,q_2]$ that both belong to $D_1\cap D_2$. We can assume that the origin is an interior point of the interval $[p,q]$. Since there are only two common supporting lines to $D_1$ and $D_2$, we have exactly two directions $u_1$ and $u_2$, such that $h_{D_1}(u_1) = h_{D_2}(u_1)$ and $h_{D_1}(u_2) = h_{D_2}(u_2)$. These directions divide the circle $S^1$ into two open arcs $U_1$ and $U_2$, satisfying $h_{D_1}(u) > h_{D_2}(u)$ for all $u\in U_1$,  and $h_{D_1}(u) < h_{D_2}(u)$ for all $u\in U_2$. Thus the line $l(p,q)$ through the points $p$ and $q$ cuts each of the bodies $D_1$ and $D_2$ into two convex parts: $D_1 = D_{11} \cup D_{12}$ and $D_2 = D_{21} \cup D_{22}$, such that $D_{11} \supset D_{21}$ and $D_{12} \subset D_{22}$. In other words, $D_1\cup D_2 = D_{11}\cup D_{22}$, where $D_{11}$ and $D_{22}$ are separated by $l(p,q)$. Now, if we take two points $X$, $Y\in D_1\cup D_2$, then we have two cases: either they lie on one side of $l(p,q)$, or on different sides. In the first case, either $X$, $Y\in D_{11}$, or $X$, $Y\in D_{22}$, which means that $[X,Y]\subset D_1\cup D_2$. In the second case, the segment $[X,Y]$ intersects $[p,q]$, and thus one part of $[X,Y]$ lies in $D_{11}$, and the other in $D_{22}$, which again implies that $[X,Y]\subset D_1\cup D_2$, meaning that $D_1\cup D_2$ is convex. Contradiction. Thus, we have proved  that at least one of the segments   $[p_1,p_2]$ or $[q_1,q_2]$ is not entirely contained   in $D_1\cup D_2$. We will assume it is  the segment $[p_1,p_2]$ and will fix the corresponding supporting line  $l$.

 \begin{center}
\includegraphics[width=0.7\linewidth]{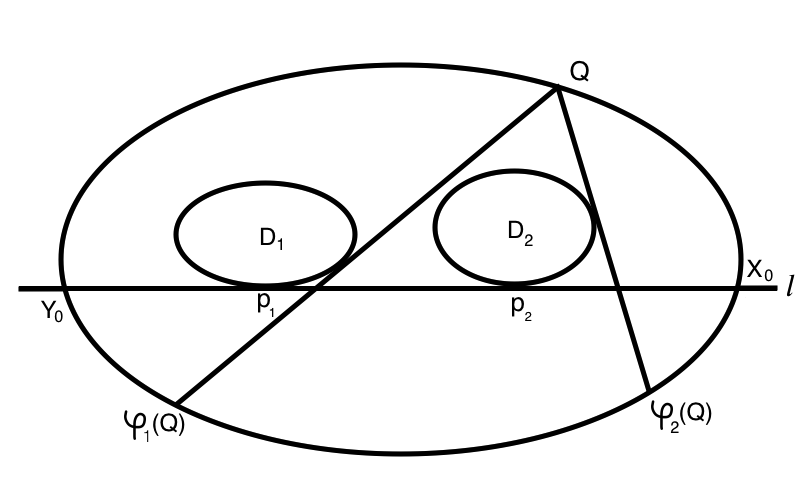}\\ Figure 3
\end{center}

{\it Step 2.} Here we will show that $\partial K \cap l = \partial L\cap l$. To this end, we define two mappings $\varphi_1$ and $\varphi_2$ (see Figure 3). We will start with  $\varphi_1$; the other is similar. Let $Q$ be a point outside of $D_1$. There are two unique supporting lines to $D_1$ passing through $Q$. {Choose the  one that lies on the left of the body $D_1$, when viewing from the point $Q$.} Let $T$ be the point of contact of the chosen supporting line and the body $D_1$. On this line we take a point $\varphi_1(Q)$, such that $T$ is inside the segment $[Q,\varphi_1(Q)]$ and
$$|QT|^i+|\varphi_1(Q)T|^i =\rho_{K,D_1}^i(u,\xi)+\rho_{K,D_1}^i(-u,\xi),$$
where $u$ is a unit vector parallel to $\overrightarrow{T Q}$ and $\xi$ is the outward unit normal vector to $D_1$  at $T$ (which is perpendicular to $u$).    The definition for $\varphi_2$ is similar; one only needs to replace $D_1$ by $D_2$. Note that the domains of $\varphi_1$ and $\varphi_2$ include the symmetric difference $K\triangle L$. {An important observation is that if $Q$ is on the boundary of $K$ (resp. $L$), then $\varphi_1(Q)$, $\varphi_1^{-1}(Q)$, $\varphi_2(Q)$, and $\varphi_2^{-1}(Q)$ are also on the boundary of $K$ (resp. $L$).

Note that there exists at least one point $Q\in \partial K\cap \partial L$. Otherwise, one of $\partial K$ or $\partial L$ would be strictly contained inside the other, thus violating condition (1) of the proposition.
The line $l$ divides the plane into two closed half-planes $l^+$ and $l^-$, where $l^+$ is the one that contains $D_1$ and $D_2$. If $Q\in l^+$, then applying $\varphi_1$ finitely many times, we will get a point in $l^-$ (since $\varphi_1$ cannot miss the whole half-plane), which is also a common point of the boundaries of $K$ and $L$. Thus from now on we will assume that $Q\in l^-$.
If $Q\in l$, then the proof of Step 2 is finished. If $Q$ is strictly below $l$, we will apply the following procedure.

Without loss of generality, we can assume that, if the line $\lambda$ intersects $l$, then the point of intersection lies to the left of the point $p_1$, as in Figure 2. Let us also denote  by $X_0$ and $Y_0$  the points of intersection of the boundary of $K$ with the line $l$, as in Figure 3.
Let $Q_0 = \varphi_2^{-1}(Q)$. The line $l(Q,Q_0)$ through $Q$ and $Q_0$ is tangent to $D_2$ and therefore cannot have common points with $D_1$ (otherwise rolling this line along the boundary of $D_2$ we would find a third common supporting line to both $D_1$ and $D_2$). Now consider $\varphi_1(Q_0)$ and the line $l(\varphi_1(Q_0),Q_0)$ through $\varphi_1(Q_0)$ and $Q_0$. Note that $\varphi_1(Q_0)$ is below $l$. Since $l(Q,Q_0)$ and $l(\varphi_1(Q_0),Q_0)$ are different, the points $Q$ and $\varphi_1(Q_0)$ are also different. 
Moreover, we have $\angle(\overrightarrow{ \varphi_1(Q_0)Q_0},\overrightarrow{p_1 X_0})<\angle(\overrightarrow{ QQ_0},\overrightarrow{p_1 X_0})$. Repeating this procedure, we construct $Q_1 = \varphi_2^{-1}(\varphi_1(Q_0))$ and observe that $ \angle(\overrightarrow{ \varphi_1(Q_0)Q_1},\overrightarrow{p_1 X_0})< \angle(\overrightarrow{ \varphi_1(Q_0)Q_0},\overrightarrow{p_1 X_0})$. Continuing in this manner, we obtain a sequence of points $\{Q_j\}_{j=0}^\infty$ and a corresponding sequence of angles $\{\theta_j\}_{j=0}^\infty$, defined by $Q_{j+1}=\varphi^{-1}_2(\varphi_1(Q_j))$ and $\theta_j=\angle(\overrightarrow{ \varphi_1(Q_j)Q_j},\overrightarrow{p_1 X_0})$. We note that $Q_j \in l^+\cap \partial K\cap \partial L$, and $\theta_j > \theta_{j+1}$,  for all $j$. Thus, the sequence $\{\theta_j\}$ is strictly decreasing and positive, and therefore convergent. To reach a contradiction, let us assume that the limit is not zero. Then there is a point $\tilde Q = \lim_{j\to \infty} Q_j$ that lies above the line $l$ and satisfies $\varphi_1(\tilde Q) = \varphi_2(\tilde Q) $. Thus, we have a third line that supports both $D_1$ and $D_2$. Contradiction. Hence, $\lim_{j\to \infty}\theta_j=0$, and we conclude that $\partial K\cap l=\partial L\cap l=\{X_0,Y_0\}$.
}

{{\it Step 3.}   We will prove that $\partial K$ and $\partial L$ coincide in some one-sided neighborhood of the  point $X_0$. Since  $$\frac{|Y_0p_1||X_0p_2|}{|X_0p_1||Y_0p_2|}<1,$$ we can choose positive numbers $a,b,c, d$ such that
$$
0<a<|X_0p_1|,\ |Y_0p_1|<b,\  0<c<|Y_0p_2|,\ |X_0p_2|<d, \mbox{ and }
\frac{bd}{ac}<1.
$$
}

By the continuity of the boundaries of $K$, $L$, $D_1$, and $D_2$, there exist neighborhoods, $\mathcal{N}(X_0), \ \mathcal{N}(Y_0)$, of $X_0$ and $Y_0$ respectively, such that
\begin{equation}\label{abcd}
\begin{cases}
|XT_1|>a\mbox{ and }|XT_2|<d, & \mbox{if } X\in \mathcal{N}(X_0),\\
|YT_3|>c\mbox{ and }|YT_4|<b, & \mbox{if } Y\in \mathcal{N}(Y_0),
\end{cases}
\end{equation}
where $T_1$   is the point of intersection of $l$ and the line through $X$   supporting $D_1$ (if $X$ is itself on the line $l$, then we let $T_1 = p_1$).  Similarly, $T_2$  is the point of intersection of $l$ and the line through $X$   supporting $D_2$ (again, if $X$ is on the line $l$, then we let $T_2 = p_2$). Here and below, by the supporting lines we mean those that are closest to $l$. There is no ambiguity, since $X$ is sufficiently close to $l$. (The points $T_3$ and $T_4$ are defined similarly, if we replace $X$ by $Y$).

{Next we claim that there are points of $ \partial K\cap \partial L$ in the set $ \mathcal{N}(X_0)\cap l^+$. Indeed, if in Step 2 there was a point $Q\in \partial K\cap \partial L$ strictly below the line $l$, then the points from the corresponding sequence $\{Q_i\}$ all lie in $\partial K\cap \partial L \cap \mathcal{N}(X_0)\cap l^+$ for $i$ large enough. If in Step 2 the point $Q$ was on the line $l$, then we can take $\varphi_1(\varphi_2^{-1}(X_0))$, which will be strictly below $l$, and repeat the same procedure.  }

Our goal is to show that $ \partial K$ and $ \partial L$ coincide in $ \mathcal{N}(X_0)\cap l^+$. Taking a smaller neighborhood $\mathcal{N}(X_0)$ if needed, we can assume that $\varphi_1( \mathcal{N}(X_0)\cap l^+)\subset \mathcal{N}(Y_0)$.
Discarding finitely many terms of the sequence  $\{Q_j\}$, we can also assume that $Q_j \in \mathcal{N}(X_0)\cap l^+$ for all $j\ge 0$. Now consider the segments of the boundaries of $\partial K$ and $\partial L$ between the points $Q_0$  and $Q_1$. If they coincide, then we are done, since the  boundaries of $\partial K$ and $\partial L$ would have to coincide between $Q_j$ and $Q_{j+1}$ for all $j$. So, we will next assume that $\partial K$ and $\partial L$ are not identically the same between $Q_0$  and $Q_1$. Let $E_0$ be the  component of $  K\triangle L  $ with endpoints $Q_0$  and $Q_1$, i.e. $E_0$ is the subset of $( K\triangle L )\cap l^+$  located  between the lines $l(Q_0,\varphi_1(Q_0))$ and $l(Q_1,\varphi_1(Q_0))$. We will define a sequence of sets $\{E_j\}_{j=0}^\infty$, where $E_{j+1} =  \varphi_2^{-1}(\varphi_1(E_j))$. Each $E_j$ is a component of $  K\triangle L  $ with endpoints $Q_j$ and $Q_{j+1}$.

{Now consider a Cartesian coordinate system with $l$ being the $x$-axis, and the $y$-axis perpendicular to $l$. We will be using ideas similar to those in \cite[Section 5.2]{Ga}. For a measurable set $E$  define}
\begin{equation}\label{nu_i}
\nu_i(E)=\iint_E |y|^{i-2}\,dx\,dy.
\end{equation}

{Note that $\nu_i(E)$ is invariant under shifts parallel to the $x$-axis. This allows us to associate with each $D_1$ and $D_2$  their own Cartesian systems. In both systems $l$ is the $x$-axis, but in the coordinate system associated with $D_1$ the origin is at $p_1$, while in the system associated with $D_2$ the origin is at $p_2$.}

Our goal is to estimate $\nu_i(E_j)$. Fix the Cartesian system associated with $D_1$, with $p_1$ being the origin. For a point $(x,y)\in \mathcal{N}(X_0) \cup \mathcal{N}(Y_0)$ we will introduce new coordinates $(r,\theta)$ as follows. Let  $\theta=\angle(l_{\theta,1},l)$, where $l_{\theta,1}$ is the line passing through $(x,y)$ and supporting $D_1$.    Define $r$ to be the signed distance between $(x,y)$ and the foot of the perpendicular   from the point $(0,1)$ to the line $l_{\theta,1}$. (The word ``signed" means that $r>0$ in the neighborhood of $X_0$ and $r< 0$ in the neighborhood of $Y_0$).  Let $h_{D_1}(\theta)$ be the support function of $D_1$ measured from the point $(0,1)$ in the direction of $(\sin\theta,-\cos\theta)$. Using that
$$(x,y)= h_{D_1}(0) \cdot (0,1) + r (\cos\theta, \sin\theta) + h_{D_1}(\theta)\cdot (\sin\theta,-\cos\theta),$$ we will write the integral (\ref{nu_i}) in the $(r,\theta)$-coordinates associated with $D_1$.  Since the Jacobian is $|r-h'_{D_1}(\theta)|$, and $r=h'_{D_1}(\theta)$ corresponds to the point of contact of $l_{\theta,1}$ and $D_1$, we get
\begin{eqnarray*}
&&\nu_i(E_j)=\iint_{E_j} |y|^{i-2}\,dx\,dy\\
&& =\int_{\theta_{j+1}}^{\theta_j}\left| \int_{\rho_{K,D_1}(u,\xi)-h'_{D_1}(\theta)}^{\rho_{L,D_1}(u,\xi)-h'_{D_1}(\theta)}|r\sin\theta+h_{D_1}(0)-h_{D_1}(\theta)\cos\theta|^{i-2}|r-h'_{D_1}(\theta)|\,dr\right| \, d\theta\\
&& =\int_{\theta_{j+1}}^{\theta_j}\left| \int_{\rho_{K,D_1}(u,\xi)}^{\rho_{L,D_1}(u,\xi)}|r\sin\theta+h'_{D_1}(\theta)\sin\theta+h_{D_1}(0)-h_{D_1}(\theta)\cos\theta|^{i-2}r\,dr\right| \, d\theta,
\end{eqnarray*}
where   $u=(\cos\theta, \sin \theta)$, and $\xi=(\sin\theta,-\cos\theta)$. Here the absolute value of the integral with respect to $r$ is needed, since we do not know which of $\rho_K$ or $\rho_L$ is greater.

{For small $\theta$, Lemma \ref{l2.2} yields that
$$
  h'_{D_1}(\theta)\sin\theta+h_{D_1}(0)-h_{D_1}(\theta)\cos\theta \approx \sin^2 \theta.
$$
Since $E_j$ is inside $\mathcal{N}(X_0)$, there exists a constant $C>0$ such that
$$
(1-C\sin \theta)r\sin\theta\le r\sin\theta+h'_{D_1}(\theta)\sin\theta+h_{D_1}(0)-h_{D_1}(\theta)\cos\theta\le (1+C\sin \theta)r\sin\theta,
$$
where we assume that $\theta$ is small enough so that $1-C\sin \theta>0$.
}

If $i\ge 2$, for small $\theta >0$ we have
\begin{multline*}
\left(\frac{1-C\sin\theta}{1+C\sin\theta}\right)^{ i-2 } (r\sin\theta)^{i-2}\le \left( {1-C\sin\theta} \right)^{ i-2 } (r\sin\theta)^{i-2}\\
\le |r\sin\theta+h'_{D_1}(\theta)\sin\theta+h_{D_1}(0)-h_{D_1}(\theta)\cos\theta|^{i-2}\\
\le \left( {1+C\sin\theta} \right)^{ i-2 } (r\sin\theta)^{i-2}\le \left(\frac{1+C\sin\theta}{1-C\sin\theta}\right)^{ i-2 } (r\sin\theta)^{i-2}.
\end{multline*}
On the other hand, for $i<2$,
\begin{multline*}
\left(\frac{1+C\sin\theta}{1-C\sin\theta}\right)^{ i-2 } (r\sin\theta)^{i-2}\le \left( {1+C\sin\theta} \right)^{ i-2 } (r\sin\theta)^{i-2}\\
\le |r\sin\theta+h'_{D_1}(\theta)\sin\theta+h_{D_1}(0)-h_{D_1}(\theta)\cos\theta|^{i-2}\\
\le \left( {1-C\sin\theta} \right)^{ i-2 } (r\sin\theta)^{i-2}\le \left(\frac{1-C\sin\theta}{1+C\sin\theta}\right)^{ i-2 } (r\sin\theta)^{i-2}.
\end{multline*}

Thus, for both $i\ge 2$ and $i<2$, we have
\begin{multline}\label{e2.2}
\frac{1}{i}\int_{\theta_{j+1}}^{\theta_j} \left(\frac{1-C\sin\theta}{1+C\sin\theta}\right)^{|i-2|}(\sin\theta)^{i-2}\left| {\rho_{K,D_1}^i(u,\xi)}-  \rho_{L,D_1}^i(u,\xi) \right|\, d\theta\leq \nu_i(E_j)\\
 \leq \frac{1}{i}\int_{\theta_{j+1}}^{\theta_j} \left(\frac{1+C\sin\theta}{1-C\sin\theta}\right)^{|i-2|}(\sin\theta)^{i-2}\left| {\rho_{K,D_1}^i(u,\xi)}-  \rho_{L,D_1}^i(u,\xi)\right|\, d\theta.
\end{multline}

Now apply the same estimates to $\nu_i(\varphi_1(E_j))$. Since $\varphi_1(E_j)\subset \mathcal{N}(Y_0)$, and assuming that the constant $C$ chosen above works for both $\mathcal{N}(X_0)$ and $\mathcal{N}(Y_0)$, we get

\begin{eqnarray*}
\nu_i(\varphi_1(E_j))&\geq& \frac{1}{i}\int_{\theta_{j+1}}^{\theta_j} \left(\frac{1-C\sin\theta}{1+C\sin\theta}\right)^{|i-2|}(\sin\theta)^{i-2}\left| {\rho_{K,D_1}^i(-u,\xi)}-  \rho_{L,D_1}^i(-u,\xi) \right|\, d\theta\\
&=& \frac{1}{i}\int_{\theta_{j+1}}^{\theta_j} \left(\frac{1-C\sin\theta}{1+C\sin\theta}\right)^{|i-2|}(\sin\theta)^{i-2}\left| {\rho_{K,D_1}^i(u,\xi)}-  \rho_{L,D_1}^i(u,\xi) \right|\, d\theta\\
&=& \frac{1}{i}\int_{\theta_{j+1}}^{\theta_j} \left(\frac{1-C\sin\theta}{1+C\sin\theta}\right)^{2|i-2|}\left(\frac{1+C\sin\theta}{1-C\sin\theta}\right)^{|i-2|}(\sin\theta)^{i-2}\left| {\rho_{K,D_1}^i(u,\xi)}-  \rho_{L,D_1}^i(u,\xi) \right|\, d\theta\\
&\geq& \left(\frac{1-C\sin\theta_{j}}{1+C\sin\theta_{j}}\right)^{2|i-2|}\nu_i(E_j),
\end{eqnarray*}
since $\displaystyle \frac{1-C\sin \theta }{1+C\sin \theta }$ is decreasing.

Define another sequence of angles $ \eta_j= \angle(\overrightarrow{ \varphi_1(Q_j)Q_{j+1}},\overrightarrow{p_1 X_0})$. Then calculations similar to those above give
$$
\nu_i(E_{j+1})\geq \left(\frac{1-C\sin\eta_{j}}{1+C\sin\eta_{j}}\right)^{2|i-2|}\nu_i(\varphi_1(E_j)).
$$
Thus,
\begin{equation}\label{(j+1)via(j)}
\nu_i(E_{j+1})\geq \left(\frac{1-C\sin\eta_{j}}{1+C\sin\eta_{j}}\right)^{2|i-2|} \left(\frac{1-C\sin\theta_{j}}{1+C\sin\theta_{j}}\right)^{2|i-2|} \nu_i( E_j).
\end{equation}
Observe that (\ref{abcd}) implies, for all $j$,
$$
\frac{\sin \theta_{j+1}}{\sin \theta_{j}}=\frac{\sin \theta_{j+1}}{\sin \eta_{j}}\frac{\sin \eta_{j}}{\sin \theta_{j}} \leq \frac{db}{ac}<1,
$$
and, similarly,
$$
\frac{\sin \eta_{j+1}}{\sin \eta_{j}} \leq \frac{db}{ac}.
$$
Set $\displaystyle k=\frac{db}{ac}$. Then  $\sin \theta_{j}\leq k^j \sin \theta_0\leq k^j$ and $\sin \eta_{j}\leq k^j \sin \eta_0\leq k^j$.

For sufficiently small $x>0$, we have the following inequalities: $1+x\leq e^x$ and $1-x\geq e^{-2x}$. Let  $N>0$ be large enough so that $x = C k^j$ satisfies the latter two inequalities for all $j\ge N$.
Then for all $j\ge N$, we have
$$
\nu_i(E_{j+1})\geq \left(\frac{1-Ck^j}{1+Ck^j}\right)^{4|i-2|}\nu_i(E_j)\geq \left(\frac{e^{-2Ck^j}}{e^{Ck^j}}\right)^{4|i-2|}\nu_i(E_j) =   e^{-12C|i-2|k^j}  \nu_i(E_j) .
$$
Using the latter estimate inductively, we get
$$
\nu_i(E_{j+1})\ge \prod_{m=N}^{j}e^{-12C|i-2|k^m} \nu_i(E_N)=\exp\left\{-12C|i-2|\sum_{m=N}^j k^{m}\right\} \nu_i(E_N) \ge \gamma \nu_i(E_N),
$$
where $$\gamma = \exp\left\{-12C|i-2|\sum_{m=N}^\infty  k^{m}\right\} >0.$$
Since all $E_j$ are disjoint, and since $\nu_i(E_N) \ge \tilde C \nu_i(E_0)>0$, for some constant $\tilde C$ (by virtue of (\ref{(j+1)via(j)})), we conclude that
$$
\nu_i\left(\bigcup_{j=N+1}^\infty E_j\right)=\sum_{j=N+1}^\infty \nu_i(E_j)\geq \gamma \sum_{j=N+1}^\infty\nu_i(E_N)=\infty.
$$

Since $l\cap (K\triangle L)=\{X_0,Y_0\}$, there exists a triangle $T$ with one vertex at $X_0$ satisfying $T\cap l=X_0$ and $\cup_{j=N+1}^\infty E_j\subset T$, implying
$$
\nu_i(T)\geq \nu_i\left(\bigcup_{j=N+1}^\infty E_j\right)=\infty.
$$
{However, by   \cite[Lemma 5.2.4]{Ga}, any triangle of the form $T=\{(x,y):a|x-x_0|\leq y\leq b\}$, for $a,b>0$, has finite $\nu_i$-measure. We get a contradiction.}
Thus, $\partial K = \partial L$ in $\mathcal{N}(X_0) \cap l^+$.

{\it Step 4}.   To finish the proof, we take any point $A\in \partial K$. Applying $\varphi_1$ to $A$ finitely many times, we can get a point $A'$ in $l^- \cap \partial K$. As in Step 2, produce a sequence of points $A_{j+1}=\varphi^{-1}_2(\varphi_1(A_j))$ with $A_0=\varphi_2^{-1}(A')$. As we have seen above, there is a number $M$ large enough such that $A_M\in \mathcal{N}(X_0)\cap l^+$. Applying the conclusion of Step 3, we get $A_M\in \partial K\cap \partial L$. Tracing the sequence $\{A_i\}$ backwards, we conclude that $A\in \partial K\cap \partial L$. Therefore, $K=L$.

 We now briefly comment on how to proceed if we use condition (II) of the theorem. Note that here we require that there is a point $Q\in \partial K\cap \partial L$. We  define $\varphi_1$ and $\varphi_2$ in a similar way as above, with the only difference that
$$
|QT|^i-|\varphi_j(Q)T|^i =\rho_{K,D_j}^i(u,\xi)-\rho_{K,D_j}^i(-u,\xi),
$$
for $j=1,2$. The rest of the proof goes without any changes.
}

\end{proof}

{
\begin{rem}\label{Remark_smoothness}
The $C^2$-smoothness assumption for the support functions of the bodies  $D_1$ and $D_2$ can be relaxed. As we saw above, we only need the $C^2$ condition in some neighborhoods of the points $p_1$ and $p_2$ correspondingly. Moreover, $D_1$ or $D_2$ can also be polygons.  In the latter case, $\rho_{K,D_j}$ is not well defined for finitely many supporting lines, but this is not an issue.
 Step 1 of the proof does not need any changes, since it was proved for bodies that are not necessarily strictly convex. In Step 2, we consider small one-sided neighborhoods of $X_0$ and $Y_0$, where $\rho_{K,D_j}$ is well-defined. As for Step 3, the proof will be similar to \cite[Section 5.2]{Ga}, since all supporting lines to a polygon $D_j$ passing through points $X\in \mathcal{N}(X_0)\cap l^+$ will contain the same vertex of $D_j$. Thus, as in \cite{Ga}, the measure $\nu_i$ would be invariant under $\varphi_j$. So, whenever we speak about admissible bodies, one can consider a larger class of admissible bodies by including the bodies described in this remark.
\end{rem}
}

The following is an immediate corollary of Theorem \ref{main}.

\begin{cor}\label{c2.5}
Let $K$ and $L$ be origin-symmetric convex bodies in $\mathbb R^2$ and let $D$ be a    convex body  in the interior of $K\cap L$, { such that $D$ and $-D$ are admissible bodies}. If the chords $K\cap H$ and $L\cap H$ have equal length for all $H$ supporting   $D$, then $K=L$. In particular, $D$ can be a disk {not centered at the origin.}
\end{cor}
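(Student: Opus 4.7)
The plan is to reduce Corollary \ref{c2.5} to Theorem \ref{main} by setting $D_1 = D$ and $D_2 = -D$, where $-D$ denotes the reflection of $D$ through the origin. The assumption that $D$ and $-D$ are admissible is exactly what is needed as the admissibility hypothesis of Theorem \ref{main}.

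The key observation is that since $K$ and $L$ are origin-symmetric, we have $K = -K$ and $L = -L$. Thus for any hyperplane $H$ supporting $-D$, the reflected hyperplane $-H$ supports $D$, and by hypothesis $\mathrm{vol}_{1}(K\cap (-H)) = \mathrm{vol}_{1}(L\cap(-H))$. Since $K$ is origin-symmetric, $K\cap(-H) = -(K\cap H)$, so $\mathrm{vol}_{1}(K\cap H) = \mathrm{vol}_{1}(K\cap(-H))$, and similarly for $L$. Combining these equalities gives $\mathrm{vol}_{1}(K\cap H) = \mathrm{vol}_{1}(L\cap H)$ for every $H$ supporting $-D$. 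Together with the hypothesis for hyperplanes supporting $D$, we obtain the chord length equality for every $H$ supporting $D_1$ or $D_2$.

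Now we are in the setting of Theorem \ref{main} with $D_1 = D$ and $D_2 = -D$, which are admissible by assumption, and both are contained in the interior of $K \cap L$ (since $D \subset \inte (K\cap L)$ and $-D \subset \inte(-(K\cap L)) = \inte(K\cap L)$ by origin-symmetry). The theorem then yields $K = L$.

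For the particular case when $D$ is a disk not centered at the origin, $D$ and $-D$ are two disks of the same radius whose centers are symmetric about the origin but distinct. Depending on the distance between their centers and the common radius, either they are disjoint, or they touch, or they overlap with boundaries meeting in exactly two points; in every such case, $D \cup (-D)$ is not convex and there are exactly two common supporting lines not separating them, so the pair is admissible. There is no obstacle in this proof since the work has been done in Theorem \ref{main}; the only mild subtlety is in checking that the hypothesis on hyperplanes supporting $D$ already implies the analogous hypothesis for $-D$ via origin-symmetry.
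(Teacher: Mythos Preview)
Your proposal is correct and is exactly the argument the paper has in mind: the paper states this as an ``immediate corollary of Theorem \ref{main}'' without further proof, and your reduction via $D_1=D$, $D_2=-D$ together with the origin-symmetry of $K$ and $L$ is precisely how one fills in the details. The verification for the disk case is also accurate.
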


Using the same ideas, one can prove the following.
\begin{cor}\label{c2.6}
Let $K$ and $L$ be origin-symmetric convex bodies in $\mathbb R^2$ and let $D$ be a   convex body  outside of $K\cup L$ (either a  polygon or a  body  with a $C^2$ support function). If the chords $K\cap H$ and $L\cap H$ have equal length for all $H$ supporting   $D$, then $K=L$.
\end{cor}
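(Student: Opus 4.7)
The plan is to reduce this to the two-body framework of Theorem \ref{p3.4} by exploiting origin-symmetry. Because $K$ and $L$ are origin-symmetric, $|K\cap H| = |K\cap(-H)|$ and $|L\cap H| = |L\cap(-H)|$ for every line $H$; so the assumed equality of chord lengths on lines supporting $D$ automatically extends to lines supporting $-D$. Setting $D_1 = D$ and $D_2 = -D$, we obtain two bodies lying entirely in the exterior of $K\cup L$ such that chord lengths of $K$ and $L$ agree on every line supporting $D_1$ or $D_2$. Since $D_1$ and $D_2$ are centrally symmetric to each other about the origin and both lie outside $K\cup L$, they admit exactly two common tangent lines that do not separate them (the external tangents), which will play the role of the lines $l$ and $\lambda$ from Theorem \ref{p3.4}.

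I would then adapt Steps 2--4 of the proof of Theorem \ref{p3.4} to the exterior setting. For a point $Q$ outside $D_j$, define $\varphi_j(Q)$ by following the appropriate supporting line to $D_j$ through $Q$ and taking the other intersection with $\partial K$ (resp.\ $\partial L$); the chord-length equality still forces $\varphi_j$ to map $\partial K$ into $\partial K$ and $\partial L$ into $\partial L$. Starting from a common boundary point $Q_0\in \partial K\cap \partial L$--whose existence is forced by the hypothesis, since otherwise one body would be strictly inside the other and some line supporting $D_1$ would cut them in chords of unequal length--the iteration $Q_{j+1} = \varphi_2^{-1}(\varphi_1(Q_j))$ produces a sequence of common boundary points whose angles to a fixed reference direction are strictly monotone. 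The only possible limit angle is that of a common tangent, because a third common tangent is forbidden by the admissibility setup; this yields a limit point $X_0 \in \partial K \cap \partial L$ on one of the external tangent lines $l$.

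Finally, the weighted $\nu_i$-measure argument of Step 3 carries over essentially verbatim: Lemma \ref{l2.2} still gives $h_{D_j}'(\theta)\sin\theta + h_{D_j}(0) - h_{D_j}(\theta)\cos\theta \approx \sin^2\theta$ near the tangent, the measure $\nu_i$ is translation-invariant along $l$, and the telescoping estimate on $\nu_i(E_j)$ forces the $\nu_i$-measure of the symmetric difference inside any local triangle at $X_0$ to be infinite, contradicting Lemma 5.2.4 of \cite{Ga} unless $\partial K = \partial L$ in a neighborhood of $X_0$; iterating $\varphi_1$ then propagates this coincidence to yield $K=L$ globally. The polygonal case is handled as in Remark \ref{Remark_smoothness}. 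The main obstacle I anticipate is fixing the orientation of the supporting-line choices in the definitions of $\varphi_1$ and $\varphi_2$ so that the angle sequence actually decreases in the exterior geometry, since the sign conventions invert relative to the interior case treated in Theorem \ref{p3.4}; once this bookkeeping is set up, the quantitative estimates transfer directly.
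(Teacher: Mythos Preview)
The paper supplies no proof here beyond the phrase ``Using the same ideas, one can prove the following,'' and your plan---pass to $D_1=D$, $D_2=-D$ via origin-symmetry and then rerun the iteration and $\nu_i$-measure argument of Theorem~\ref{p3.4} in the exterior configuration---is exactly the intended adaptation.

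One correction is needed, though. You single out the \emph{external} common tangents of $D$ and $-D$ as the limiting lines $l,\lambda$, but these need not meet $K$ or $L$ at all (take $K$ small and $D$ far from the origin), so the sequence $Q_j$ cannot converge to a point of $\partial K$ lying on such a line. The \emph{internal} common tangents, by contrast, always pass through the origin (since $D$ and $-D$ are reflections of one another through the origin) and hence through the interiors of $K$ and $L$. If you fix the orientation of the supporting-line choices in $\varphi_1,\varphi_2$ so that the common fixed line of $\varphi_2^{-1}\circ\varphi_1$ is an internal tangent, then the monotone-angle argument and the $\nu_i$ estimate go through just as you describe. This is precisely the sign inversion you anticipate in your final paragraph; once the correct limiting line is identified, the rest of your outline is sound.
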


If $H$ is a supporting line to a body $D\subset\mathbb R^2$, we will denote by $H^+$ the half-plane bounded by $H$ and disjoint from the interior of $D$.

\begin{thm}\label{t2.7}
Let $K$ and $L$ be convex bodies in $\mathbb R^2$ and let $D_1$ and $D_2$ be two admissible convex bodies (either convex polygons or bodies with $C^2$ support functions) in the interior of $K\cap L$. If $ \textnormal{vol}_2(K\cap H^+) = \textnormal{vol}_2(L\cap H^+)$ for every $H$ supporting $D_1$ or $D_2$, then $K=L$.
\end{thm}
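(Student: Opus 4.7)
The plan is to reduce Theorem \ref{t2.7} to condition (II) of Theorem \ref{p3.4} with $i=2$. Two things must be established: a differentiation identity linking the cap-volume function $\phi_j(\theta):=\textnormal{vol}_2(K\cap H_\theta^+)$ to the asymmetry of the chord $K\cap H_\theta$ about the contact point $p_j(\theta)\in\partial D_j$, and the non-emptiness of $\partial K\cap\partial L$ required by condition (II).

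For the differentiation, I would parametrize the supporting lines to $D_j$ by their outer unit normal $\xi(\theta)=(\cos\theta,\sin\theta)$, set $u(\theta)=\xi'(\theta)$, and translate coordinates so that $p_j(\theta)$ becomes the origin, $H_\theta$ the $x$-axis, and $H_\theta^+=\{y\ge 0\}$. In these coordinates the translated support function $\tilde h_{D_j}$ vanishes to first order at $\theta$, and a straightforward expansion gives that a point $(s,y)$ lies in $H_{\theta+\epsilon}^+$ precisely when $y+s\epsilon+O(\epsilon^2)\ge 0$. The symmetric difference $(K\cap H_{\theta+\epsilon}^+)\triangle(K\cap H_\theta^+)$ therefore splits into two thin wedges along the chord $K\cap H_\theta$: the wedge over $s\in(0,\rho_+]$ is added to the cap with thickness $\epsilon s$, while the wedge over $s\in[-\rho_-,0)$ is removed with thickness $-\epsilon s$, where $\rho_\pm=\rho_{K,D_j}(\pm u,\xi)$. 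Integration yields
\[
\phi_j'(\theta)=\tfrac12\bigl(\rho_{K,D_j}^2(u,\xi)-\rho_{K,D_j}^2(-u,\xi)\bigr),
\]
so the hypothesis $\phi_j^K\equiv\phi_j^L$ gives, for $j=1,2$,
\[
\rho_{K,D_j}^2(u,\xi)-\rho_{K,D_j}^2(-u,\xi)=\rho_{L,D_j}^2(u,\xi)-\rho_{L,D_j}^2(-u,\xi),
\]
which is exactly condition (II) of Theorem \ref{p3.4} with $i=2$.

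For the boundary intersection, I would argue by contradiction: if $\partial K\cap\partial L=\emptyset$, then one body is strictly contained in the interior of the other, say $K\subset \inte L$. For any supporting line $H$ to $D_1$ (or $D_2$) the set $\partial L\cap H^+$ is non-empty, and since $K$ is compactly contained in $\inte L$, the open set $(L\setminus K)\cap H^+$ contains an entire neighborhood of any such boundary point and therefore has positive area, forcing $\textnormal{vol}_2(L\cap H^+)>\textnormal{vol}_2(K\cap H^+)$ and contradicting the hypothesis.

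Combining the two, condition (II) of Theorem \ref{p3.4} applies with $i=2$ and delivers $K=L$. I expect the differentiation step to be the main technical issue, especially in the polygonal subcase of admissibility permitted by Remark \ref{Remark_smoothness}: at directions $\theta$ corresponding to a vertex of $D_j$ the contact point $p_j(\theta)$ is constant over an interval and the supporting line simply rotates about that vertex, so the same local expansion applies and in fact simplifies. Thus the identity above holds at every $\theta$ outside a finite set, and the continuity of $\phi_j^K,\phi_j^L$ together with $\phi_j^K\equiv\phi_j^L$ makes this exceptional set harmless.
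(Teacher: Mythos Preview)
Your proposal is correct and follows essentially the same route as the paper: both differentiate the cap-volume function with respect to the normal angle to obtain $\rho_{K,D_j}^2(u,\xi)-\rho_{K,D_j}^2(-u,\xi)=\rho_{L,D_j}^2(u,\xi)-\rho_{L,D_j}^2(-u,\xi)$, verify $\partial K\cap\partial L\ne\emptyset$ via the strict-containment contradiction, and then invoke condition (II) of Theorem~\ref{p3.4} with $i=2$. The only cosmetic difference is that the paper carries out the differentiation through an explicit $(r,\theta)$ change of variables (subtracting the volumes of two nearby caps, computing the Jacobian $|r+h_D'(\theta)|$, and differentiating in the angle $\phi$), whereas your thin-wedge expansion about a fixed $\theta$ is the same computation in infinitesimal form.
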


\begin{proof}

{First we will prove the following claim. Let $K$ an $L$ be convex bodies in $\mathbb R^2$, $D$ be a convex body in the interior of $K\cap L$, where $D$ is either a body with $C^2$ support function or a polygon.
If $\mbox{vol}_2(K\cap H^+) = \mbox{vol}_2(L\cap H^+)$ for every $H$ supporting $D$, then $$\rho^2_{K,D}(u,\xi)-\rho^2_{K,D}(-u,\xi)=\rho^2_{L,D}(u,\xi)-\rho^2_{L,D}(-u,\xi),$$ for every $\xi \in S^1$ and $u\in S^1\cap \xi^\perp$, whenever well-defined.
(Note that in the case when $D$ is a polygon,  the radial functions above are not well-defined for finitely many directions $\xi$ that are orthogonal to the edges of $D$).

\begin{center}
\includegraphics[width=0.7\linewidth]{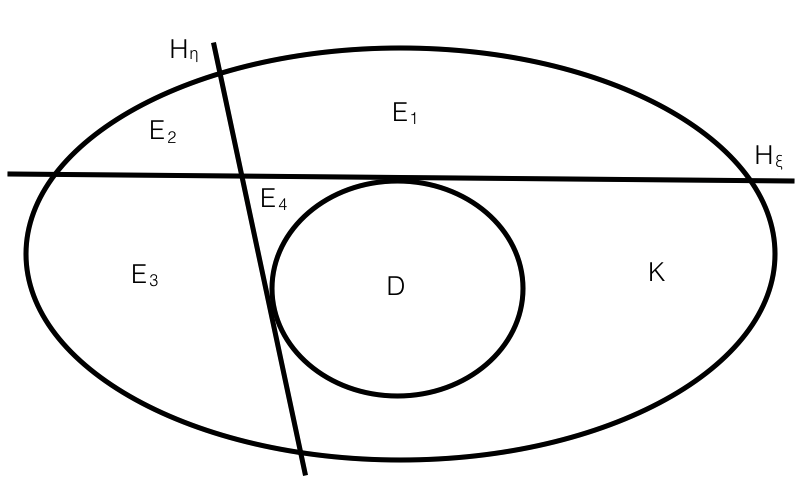}\\
Figure 4
\end{center}

We will treat simultaneously both the case of smooth bodies and polygons.
To prove the claim, let $\xi$ be any unit vector  (and $\xi$ is not orthogonal to an edge of $D$, if $D$ is a polygon).  Let $H_\xi$ be the supporting line orthogonal to $\xi$.
Let  $\zeta\in S^1\cap \xi^\perp$. For a small angle $\phi>0$ let $\eta = \cos\phi\, \xi + \sin\phi\, \zeta$, and denote by $H_\eta$ the supporting line orthogonal to $\eta$. Define the following sets: $E_1 =H_\xi^+\backslash H_\eta^+$, $E_2 =H_\xi^+\cap H_\eta^+$, $E_3 = H_\eta^+\backslash H_\xi^+$, and $E_4$ is the curvilinear triangle enclosed by $H_\xi$, $H_\eta$, and the boundary of $D$; see Figure 4.

Note that when $\eta$ and $\xi$ are close enough, we have $E_4\subset K\cap L$, and   $E_4$ is empty if $D$ is a polygon.  By the assumption of the theorem,
$$
\mbox{vol}_2((E_1\cup E_2)\cap K)-\mbox{vol}_2((E_3\cup E_2)\cap K)= \mbox{vol}_2((E_1\cup E_2)\cap L)-\mbox{vol}_2((E_3\cup E_2)\cap L),
$$
implying
\begin{equation}\label{E5}
\mbox{vol}_2((E_1\cup E_4)\cap K)-\mbox{vol}_2((E_3\cup E_4)\cap K)= \mbox{vol}_2((E_1\cup E_4)\cap L)-\mbox{vol}_2((E_3\cup E_4)\cap L).
\end{equation}
Now we will consider the following coordinate system $(r,\theta)$
associated with $D$. For a point $(x,y)$ outside of $D$, we  let
$(x,y) = h_D(\theta) \left(\cos\theta\, \xi + \sin\theta\,
  \zeta\right) + r (\sin\theta\, \xi - \cos\theta\, \zeta)$, where
$h_D(\theta)$ is the support function of $D$ in the direction of
$v=\cos\theta\, \xi + \sin\theta\, \zeta$. Setting $w = \sin\theta\,
\xi - \cos\theta\, \zeta$, and observing that the Jacobian is $|r+h_D'(\theta)|$, we get
\begin{eqnarray*}
&&
\int_{0}^{\phi}\int_{h'_{D}(\theta)}^{\rho_{K,D}(w,v)+h'_{D}(\theta)}|r+h'_{D}(\theta)|\,dr
   \,d\theta-
   \int_{0}^{\phi}\int_{h'_{D}(\theta)}^{\rho_{K,D}(-w,v)+h'_{D}(\theta)}|r+h'_{D}(\theta)|\,dr
   \,d\theta\\
&&=\int_{0}^{\phi}\int_{h'_{D}(\theta)}^{\rho_{L,D}(w,v)+h'_{D}(\theta)}|r+h'_{D}(\theta)|\,dr \,d\theta-\int_{0}^{\phi}\int_{h'_{D}(\theta)}^{\rho_{L,D}(-w,v)+h'_{D}(\theta)}|r+h'_{D}(\theta)|\,dr \,d\theta,
\end{eqnarray*}
which after a variable change becomes
$$
\int_{0}^{\phi}\int_{0}^{\rho_{K,D}(w,v)}r\,dr \,d\theta- \int_{0}^{\phi}\int_{0}^{\rho_{K,D}(-w,v)}r\,dr \,d\theta=\int_{0}^{\phi}\int_{0}^{\rho_{L,D}(w,v)}r\,dr \,d\theta-\int_{0}^{\phi}\int_{0}^{\rho_{L,D}(-w,v)}r\,dr \,d\theta.
$$
Differentiating both sides with respect to $\phi$, and setting
$\phi=0$, we get
 $$\rho^2_{K,D}(u,\xi)-\rho^2_{K,D}(-u,\xi)=\rho^2_{L,D}(u,\xi)-\rho^2_{L,D}(-u,\xi),$$
as claimed.

To finish the proof of the theorem, note that $\partial K\cap \partial
L\cap l^-\ne \emptyset$, where $l$ is the common supporting line to
$D_1$ and $D_2$ as in Theorem \ref{p3.4}; otherwise we would have
$\mbox{vol}_2(K\cap l^-)< \mbox{vol}_2(L\cap l^-)$ or
$\mbox{vol}_2(K\cap  l^-)> \mbox{vol}_2(L\cap  l^-)$, which
contradicts the hypotheses.

Now the conclusion follows from Theorem \ref{p3.4}.
}
\end{proof}

 {
\begin{cor}
Let $K$ be a convex body in $\mathbb{R}^2$ and let $D$ be a disk in the interior of $K$. If $\textnormal{vol}_2(K\cap H^+)=$ const for every $H$ supporting $D$, then $K$ is also a disk.
\end{cor}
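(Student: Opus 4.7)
The plan is to reduce the constant-area hypothesis to a midpoint property for the chords $K\cap H$ by differentiating in $\theta$, and then to show that this midpoint property alone forces $K$ to be a disk concentric with $D$.

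Place the center of $D$ at the origin and assume $D$ has radius $1$. For $\theta\in[0,2\pi)$ set $\xi_\theta = (\cos\theta, \sin\theta)$, $u(\theta) = (-\sin\theta, \cos\theta)$, and $H_\theta = \{x : \langle x, \xi_\theta\rangle = 1\}$. Writing $K\cap H_\theta = \{\xi_\theta+\tau u(\theta) : -r_-(\theta)\le \tau\le r_+(\theta)\}$ and differentiating the hypothesis $\textnormal{vol}_2(K\cap H_\theta^+)=c$ in $\theta$ (using $\xi_\theta'=u(\theta)$), one finds
$$0 = \int_{K\cap H_\theta}\langle x,u(\theta)\rangle\,d\sigma = \int_{-r_-(\theta)}^{r_+(\theta)} \tau\,d\tau = \frac{r_+(\theta)^2-r_-(\theta)^2}{2},$$
so $r_+(\theta) = r_-(\theta)$ for all $\theta$: the tangency point $\xi_\theta$ bisects the chord $K\cap H_\theta$. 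Next I would translate this into a functional equation for the radial function $\rho$ of $K$ from the origin (continuous on $S^1$ with $\rho>1$, since $D\subset\inte K$). The polar angles of the chord endpoints are $\phi_\pm(\theta) = \theta\pm\alpha(\theta)$, where $\alpha(\theta)$ is determined by $\rho(\phi_\pm)\cos\alpha=1$, and the midpoint condition then becomes $\rho(\theta+\alpha)=\rho(\theta-\alpha)=\sec\alpha$. Defining $T\colon S^1\to S^1$ by $T(\phi)=\phi-2\arccos(1/\rho(\phi))$, this reads $\rho\circ T=\rho$; consequently $\alpha=\arccos(1/\rho)$ is also $T$-invariant, and a straightforward induction yields $T^n(\phi)=\phi-2n\,\alpha(\phi)$ for every $n\in\Z$.

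The main obstacle is to conclude from the $T$-invariance of $\rho$ that $\rho$ is constant, and I would handle this by a dichotomy on $\alpha$. If there exists $\phi_0\in S^1$ with $\alpha_0=\alpha(\phi_0)$ such that $\alpha_0/\pi\notin\Q$, then the orbit $\{\phi_0-2n\alpha_0\bmod 2\pi : n\in\Z\}$ is dense in $S^1$ by Weyl equidistribution, so by continuity of $\rho$ together with $\rho(T^n\phi_0)=\sec\alpha_0$ we obtain $\rho\equiv\sec\alpha_0$ on $S^1$. Otherwise $\alpha(\phi)/\pi\in\Q$ for every $\phi$, and then $\alpha$ is a continuous map from the connected set $S^1$ into the totally disconnected set $\pi\Q\cap(0,\pi/2)$, hence constant; again $\rho=\sec\alpha$ is constant. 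In either case $K$ is a disk centered at the center of $D$, as claimed.
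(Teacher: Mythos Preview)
Your proof is correct and follows essentially the same route as the paper's. Both arguments first reduce the constant-area hypothesis to the midpoint property $r_+(\theta)=r_-(\theta)$ (the paper obtains this from the proof of its Theorem~3.6 via the $E_1,\dots,E_4$ decomposition; you do it by a direct Leibniz-type differentiation), and then iterate a circle map---your $T(\phi)=\phi-2\arccos(1/\rho(\phi))$ is exactly the paper's map $\varphi$, written in polar coordinates rather than in the tangent-line coordinates $(\theta,r)$, since $\arccos(1/\rho)=\arctan r$. The concluding dichotomy (irrational rotation gives a dense orbit; if all rotation angles are rational multiples of $\pi$ then continuity and connectedness force the angle, hence $\rho$, to be constant) is identical in both proofs. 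A tiny remark: you only need density of irrational rotations, not Weyl equidistribution.
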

}
\begin{proof}
 {From the proof of Theorem \ref{t2.7} we see that the condition $\mbox{vol}_2(K\cap H^+)=C$ for every line $H$ supporting $D$ implies   $\rho^2_{K,D}(u,\xi)-\rho^2_{K,D}(-u,\xi)=0$ for all $\xi\in S^1$ and $u\in S^1\cap \xi^\perp$. Without loss of generality, let $D$ be a disk of radius $1$.  Consider the mapping $\varphi$ defined as follows.  Let $Q$ be a point outside of $D$. There are two unique supporting lines to $D$ passing through $Q$. Choose the   one that lies on the right of the disk $D$ when viewing from the point $Q$. Let $T$ be the point of contact of the chosen supporting line and the disk $D$. On this line we take a point $\varphi(Q)$, such that $T$ is the midpoint of the segment $[Q,\varphi(Q)]$.

  For a point $Q\in \partial K$ introduce the coordinates $(\theta,r)$ so that $$Q= (\cos \theta, \sin \theta)+r(
\sin \theta,-\cos \theta).$$ Then,
$$
\varphi(Q)=(\theta+2\arctan r,r).
$$
}

 {
Applying $\varphi$ to $\varphi(Q)$ and iterating this procedure, we get a   set $$E=\{((\theta+2n\arctan r)\mbox{ mod}\  2\pi),r):\ n\in \mathbb{N}\}\subset \partial K.$$ Note that all points in this set are at the same distance from the origin. If $\arctan r$ is an irrational multiple of $\pi$, $E$ is a dense subset of $\partial K$, implying that $K$ is a disk. If $\arctan r$ is a rational multiple of $\pi$,  we will argue by contradiction. Assume $K$ is not a disk. By the continuity of $\partial K$, there exists a point on the boundary of $K$ with coordinates $(\theta',r')$, such that, $\arctan r'$ is an irrational multiple of $\pi$. Contradiction.
}
\end{proof}

\section{Main results: Higher dimensional cases.}
\begin{thm}\label{t2.12}
Let $K$ and $L$ be convex bodies in $\mathbb R^n$ (where $n$ is even) and let $D$ be a cube in the interior of $K\cap L$. If $\textnormal{vol}_{n-1}(K\cap H) = \textnormal{vol}_{n-1}(L\cap H)$ for any hyperplane passing through a vertex of $D$ and an interior point of $D$, then $K=L$.
\end{thm}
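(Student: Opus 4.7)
The plan is to reduce the theorem to the Falconer--Gardner result recalled in the introduction, which asserts that $K=L$ as soon as the $(n-1)$-volumes of sections agree for every hyperplane through one of two fixed interior points.

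For each vertex $v$ of the cube $D$, the hypothesis supplies the identity $\mathrm{vol}_{n-1}(K\cap H)=\mathrm{vol}_{n-1}(L\cap H)$ for exactly those hyperplanes $H\ni v$ whose unit normal lies \emph{outside} the normal cone $N_v(D)\cup(-N_v(D))$. For a cube vertex this forbidden set of normals is a full-dimensional spherical octant of $S^{n-1}$, so a plain continuity/density argument does \emph{not} extend the identity to all hyperplanes through $v$; this is the essential new difficulty compared to the smooth case. What saves the argument is that the $2^n$ normal cones $\{N_v(D)\}_v$ tile $\mathbb{R}^n$: every generic direction $\xi\in S^{n-1}$ is forbidden only for the antipodal vertex pair $v_{\mathrm{sgn}(\xi)},\,v_{-\mathrm{sgn}(\xi)}$ and is admissible at each of the remaining $2^n-2$ vertices. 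So every direction is covered many times over.

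Next, I would translate the supplied section identities into Funk-transform identities: at each vertex $v$, the spherical Radon transform of $\rho_{K-v}^{\,n-1}-\rho_{L-v}^{\,n-1}$ on $S^{n-1}$ vanishes on an explicit open subset (the complement of $\pm N_v(D)$ on the sphere). Combining these partial vanishing statements across the $2^n$ vertices should force the even-part identity
\[
\rho_{K-v}^{\,n-1}(u)+\rho_{K-v}^{\,n-1}(-u)=\rho_{L-v}^{\,n-1}(u)+\rho_{L-v}^{\,n-1}(-u)
\]
to hold at every vertex $v$ and every direction $u$. Once this full identity is available at two distinct vertices, Falconer--Gardner closes the argument and yields $K=L$.

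The hardest step is the combinatorial/Fourier-analytic patching that turns the partial Funk-transform vanishing statements (one per vertex, each on a different open subset of $S^{n-1}$) into full vanishing on all of $S^{n-1}$. This is where I expect the parity assumption $n$ even to enter in an essential way: both the inversion of the Funk transform on $S^{n-1}$ and the distributional Fourier transform of the homogeneous functions $|\xi|^{-s}$ on $\mathbb{R}^n$ behave qualitatively differently in even versus odd dimension, so the linear system coming from the vertex data likely has the needed rank only when $n$ is even.
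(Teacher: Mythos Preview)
Your diagnosis of the difficulty is accurate, and you are right that the parity assumption enters through a local inversion property of the Funk transform: in even $n$, vanishing of the section function on an equatorial strip $E_\epsilon(\xi)$ forces the even part of $\rho_{K-v}^{\,n-1}-\rho_{L-v}^{\,n-1}$ to vanish on the dual cap $U_\epsilon(\xi)$ (this is the content of the paper's Lemma~\ref{l2.13}). Applied at a cube vertex $v$, this yields the even-part identity precisely for directions in the double cone $C_D(v)-v$, i.e.\ for $\xi\in \pm N_v(D)$, and nowhere else.

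The gap is in your patching step. The partial identities at different vertices are identities for \emph{different} functions: at $v$ you control $\rho_{K-v}^{\,n-1}-\rho_{L-v}^{\,n-1}$ on $\pm N_v$, while at $v'$ you control $\rho_{K-v'}^{\,n-1}-\rho_{L-v'}^{\,n-1}$ on $\pm N_{v'}$. These radial functions are related by a nonlinear change of center, not by a restriction to complementary subsets of the sphere, so there is no mechanism for ``taking the union of the regions'' to obtain the full even-part identity at any single vertex. In particular, knowing the Funk transform of $g_v:=\rho_{K-v}^{\,n-1}-\rho_{L-v}^{\,n-1}$ on $S^{n-1}\setminus(\pm N_v)$ simply does not determine $g_v$ on the complement of $\pm N_v$; the missing information is genuinely missing, and the information at other vertices is about other functions. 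Without the full identity at two vertices, the reduction to Falconer--Gardner does not go through.

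The paper does not attempt this reduction. Instead it works directly with the partial identities: for an antipodal pair of vertices $v_i,\,v_j=-v_i$ it defines reflection-type maps $\varphi_i,\varphi_j$ (well-defined only on the respective cones $C_D(v_i),\,C_D(v_j)$), checks that for a centrally symmetric $D$ these cones overlap in the right way so that one can iterate $\varphi_i\circ\varphi_j$ within a fixed $2$-plane through $v_i,v_j$, and shows the iterates of any $Q\in\partial K\cap\partial L$ converge to the line $l(v_i,v_j)$. A local lemma of Falconer then gives $\partial K=\partial L$ on $C_D(v_i)$, and one propagates to adjacent vertices along edges until the cones cover $\mathbb R^n$. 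So the argument is a dynamical one in the spirit of the two-dimensional Theorem~\ref{p3.4}, not a black-box application of Falconer--Gardner.
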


 {
  For $\epsilon>0$ and $\xi\in  {S}^{n-1}$, denote by
 $$
 U_\epsilon(\xi)=\{\eta\in  {S}^{n-1}:\langle \eta, \xi\rangle > \sqrt{1-\epsilon^2}\}
 $$
 the spherical  cap centered at $\xi$, and  by
 $$
 E_\epsilon(\xi)=\{\eta\in  {S}^{n-1}:|\langle \eta, \xi\rangle| <\epsilon\}
 $$
the neighborhood of the equator ${S}^{n-1}\cap \xi^\perp$.
}

\begin{lem}\label{l2.13}
{
Let $K$ and $L$ be convex bodies in $\mathbb R^n$ (where $n$ is even) containing the origin in their interiors. Let $\xi\in S^{n-1}$ and $\epsilon>0$. If $\textnormal{vol}_{n-1}(K\cap u^{\perp}) = \textnormal{vol}_{n-1}(L\cap u^{\perp})$ for every $u \in E_\epsilon(\xi)$, then $\rho^{n-1}_K(\eta)+\rho^{n-1}_K(-\eta)=\rho^{n-1}_L(\eta)+\rho^{n-1}_L(-\eta)$ for every $\eta \in U_\epsilon(\xi)$.
}
\end{lem}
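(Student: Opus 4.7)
My plan is to reduce the lemma to a local inversion property of the spherical Radon transform that is specific to even ambient dimension. Set
$$g(\theta):=\rho^{n-1}_K(\theta)+\rho^{n-1}_K(-\theta)-\rho^{n-1}_L(\theta)-\rho^{n-1}_L(-\theta),\qquad \theta\in S^{n-1};$$
this is an even function on $S^{n-1}$. Using $\mathrm{vol}_{n-1}(K\cap u^\perp)=\tfrac{1}{n-1}\int_{S^{n-1}\cap u^\perp}\rho_K^{n-1}(\theta)\,d\theta$ (and its analogue for $L$), together with the fact that the integral of an odd function over a great hypersphere vanishes, the hypothesis transcribes to
$$(\mathcal{R} g)(u):=\int_{S^{n-1}\cap u^\perp}g(\theta)\,d\theta=0\qquad\text{for every } u\in E_\epsilon(\xi),$$
and the desired conclusion is $g(\eta)=0$ for every $\eta\in U_\epsilon(\xi)$.

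The geometric observation is that for any $\eta\in U_\epsilon(\xi)$, the entire equator $\eta^\perp\cap S^{n-1}$ lies inside $E_\epsilon(\xi)$: for any unit $u$ with $u\perp \eta$,
$$|\langle u,\xi\rangle|=|\langle u,\xi-\langle\xi,\eta\rangle\eta\rangle|\leq \sqrt{1-\langle\xi,\eta\rangle^2}<\epsilon,$$
the final inequality using $\langle\xi,\eta\rangle>\sqrt{1-\epsilon^2}$. Since $E_\epsilon(\xi)$ is open and $\eta^\perp\cap S^{n-1}$ is compact, some open neighborhood $V\subset E_\epsilon(\xi)$ of the equator exists, on which $\mathcal{R}g\equiv 0$ together with all its derivatives.

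The core step is then to invoke a \emph{local} inversion formula for $\mathcal{R}$ in even ambient dimension: for $n$ even and any even $g\in C^\infty(S^{n-1})$, the value $g(\eta)$ is recovered from $\mathcal{R}g$ on an arbitrary open neighborhood of the equator $\eta^\perp\cap S^{n-1}$ by a differential operator of order $n-2$ acting transversally to that equator. The Fourier-analytic explanation is that the homogeneous extension $G(x):=|x|^{-(n-1)}g(x/|x|)$ has odd negative degree $-(n-1)$ when $n$ is even, so by the theory of homogeneous tempered distributions (as used in Koldobsky's book) its Fourier transform on the sphere is expressed locally through transverse derivatives of $\mathcal{R}g$, rather than via the non-local transform that arises in odd ambient dimension. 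Granting this, the vanishing of $\mathcal{R}g$ on the neighborhood $V$ produced above forces $g(\eta)=0$. The principal obstacle is the rigorous derivation of the local inversion formula --- a standard but technical Fourier-analytic computation --- and for general convex bodies one reduces to the $C^\infty$ case by approximation, passing to the limit using continuity of both $\mathcal{R}$ and the inversion operator.
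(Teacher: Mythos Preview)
Your approach is correct and rests on exactly the same mechanism as the paper's proof: locality of the spherical Radon (Funk) inversion in even ambient dimension. The paper packages this via Parseval on the sphere rather than via an explicit inversion formula. With $f(x)=\|x\|_K^{-n+1}+\|-x\|_K^{-n+1}-\|x\|_L^{-n+1}-\|-x\|_L^{-n+1}$ (whose spherical restriction is your $g$), Koldobsky's formula gives $\hat f|_{S^{n-1}}=2\pi(n-1)\,\mathcal{R}g$, vanishing on $E_\epsilon(\xi)$ by hypothesis. For any even $\psi\in C^\infty(S^{n-1})$ supported in $U_\epsilon(\xi)\cup U_\epsilon(-\xi)$, the paper cites \cite{GYY}, \cite{NRZ0} for the fact that $(\psi(x/|x|)|x|^{-1})^\wedge|_{S^{n-1}}$ is supported in $E_\epsilon(\xi)$; this is precisely the Fourier-dual of your geometric observation that $\eta^\perp\cap S^{n-1}\subset E_\epsilon(\xi)$ for $\eta\in U_\epsilon(\xi)$, combined with the fact that in even $n$ the Fourier transform of a degree $-1$ even homogeneous distribution is given by differential (hence support-preserving) operators on the Radon data. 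Parseval then yields $\int_{S^{n-1}} g\,\psi=0$ for all such $\psi$, so $g\equiv0$ on $U_\epsilon(\xi)$.

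The only substantive difference concerns non-smooth bodies. The Parseval/duality route works directly for continuous $g$, whereas your inversion by a differential operator of order $n-2$ needs $\mathcal{R}g\in C^{n-2}$, which $\mathcal{R}$ does not guarantee for merely continuous $g$. Your suggested approximation fix is a little delicate because derivatives do not pass to uniform limits; a clean repair is to mollify $g$ by convolution with an approximate identity on $SO(n)$ (this commutes with $\mathcal{R}$ and keeps $\mathcal{R}g$ vanishing on a slightly shrunk equatorial band), apply the smooth local inversion, then let the mollifier shrink. With that adjustment the two arguments are equivalent.
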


\begin{proof}

{
For every even function $\psi \in C^\infty(S^{n-1})$ with support in $U_\epsilon(\xi)\cup U_\epsilon(-\xi)$, we have
\begin{multline*}
 \int_{S^{n-1}} ( \|x \|_K^{-n+1}+\|-x\|_K^{-n+1} )\psi(x)\, dx \\
=(2\pi)^{-n}\int_{S^{n-1}} (\| x \|_K^{-n+1}+\|-x\|_K^{-n+1} )^{\wedge}(u) (\psi(x/|x|) |x|^{-1})^\wedge(u)\, du,
\end{multline*}
where we used Parseval's formula on the sphere; see \cite[Section 3.4]{K}.
}

{
Since
 $(\| x \|_K^{-n+1}+\|-x\|_K^{-n+1} )^{\wedge}(u) = 2\pi (n-1) \mbox{vol}_{n-1}(K\cap u^{\perp})$ by \cite[Lemma 3.7]{K}, the assumption of the lemma yields
$$(\| x \|_K^{-n+1}+\|-x\|_K^{-n+1} )^{\wedge}(u)=(\| x \|_L^{-n+1}+\|-x\|_L^{-n+1} )^{\wedge}(u)$$
for every $u\in E_\epsilon(\xi)$. On the other hand, by formula (3.6) from \cite{GYY} or \cite[Lemma 5.1]{NRZ0}, we see that $(\psi(x/|x|) |x|^{-1})^\wedge\Big|_{S^{n-1}}$ is supported in $E_\epsilon(\xi)$.
}

{
Therefore,
\begin{eqnarray*}
&&
 \int_{S^{n-1}} ( \|x \|_K^{-n+1}+\|-x\|_K^{-n+1} )\psi(x)\, dx  \\
&&=(2\pi)^{-n}\int_{S^{n-1}} (\| x \|_L^{-n+1}+\|-x\|_L^{-n+1} )^{\wedge}(u) (\psi(x/|x|) |x|^{-1})^\wedge(u)\, du\\
&& = \int_{S^{n-1}} (\| x \|_L^{-n+1}+\|-x\|_L^{-n+1} ) \psi(x)  \, dx.
\end{eqnarray*}
Since this true for any $\psi \in C^\infty(S^{n-1})$ with support in $U_\epsilon(\xi)\cup U_\epsilon(-\xi)$,  the conclusion follows.
}
\end{proof}

\begin{deff}\label{d2.14}
{
Let $D$ be a convex polytope and $v_k$ one of its vertices. Define $C_D(v_k)$ to be the double cone centered at $v_k$ with the property that every point in $C_D(v_k)$ lies on a line through $v_k$ that has non-empty intersection with $D\setminus \{v_k\}$.
}
\end{deff}

{Note that when $D$ is a cube, $\cup_kC_D(v_k)=\mathbb{R}^n$.
\begin{rem}\label{r2.16} For simplicity, we stated Theorem \ref{t2.12} only in the case when $D$ is a cube, but, in fact, it remains valid for a larger class of polytopes. In particular, any centrally symmetric polytope $D$ satisfying the following condition will work: $\cup_kC_D(v_k)=\mathbb{R}^n$.
\end{rem}
}
\begin{proof}[Proof of Theorem \ref{t2.12}]
We will prove the theorem for the class of polytopes described in Remark \ref{r2.16}.
{Assume that $D$ is such a polytope and its center of symmetry  is at the origin $O$.

By Lemma \ref{l2.13}, if $v_i$ is a vertex of $D$, then $$
\rho^{n-1}_{K,v_i}(\xi)+\rho^{n-1}_{K,v_i}(-\xi)=\rho^{n-1}_{L,v_i}(\xi)+\rho^{n-1}_{L,v_i}(-\xi),
$$
for every $\xi\in S^{n-1}\cap (C_D(v_i) - v_i)$. Here,   $\rho_{K,v_i} $ and $\rho_{L,v_i} $ are the radial functions of $K$ and $L$ with respect to the point $v_i$.

For a point $Q\in C_D(v_i)$ define a mapping $\varphi_i$ as follows. Let $\varphi_i(Q)$ be the point on the line through $Q$ and $v_i$, such that $v_i$ lies between $Q$ and $\varphi_i(Q)$, and
$$|Q v_i|^{n-1}+|\varphi_i(Q )v_i|^{n-1} = \rho^{n-1}_{K,v_i}(\xi)+\rho^{n-1}_{K,v_i}(-\xi)=\rho^{n-1}_{L,v_i}(\xi)+\rho^{n-1}_{L,v_i}(-\xi),$$
where $\xi$ is the unit vector in the direction of $\overrightarrow{v_iQ}$.
Note that the domain of $\varphi_i$ is not the entire set $C_D(v_i)$, but it will be enough that $\varphi_i$ is defined in some neighborhood of  $(K\triangle L) \cap C_D(v_i) $.

Note that $\partial K\cap \partial L\ne \emptyset$. Otherwise one of the bodies $K$ or $L$ would be strictly contained inside the other body, thus violating the condition  $\mbox{vol}_{n-1}(K\cap H) = \mbox{vol}_{n-1}(L\cap H)$ from the statement of the theorem. Consider a point $Q\in \partial K\cap \partial L$.  There exists a vertex $v_i$ of $D$, such that  $Q\in C_D(v_i)$.
Since $D$ is origin-symmetric, there is a vertex $v_j = - v_i$. Our first goal is to show that $l(v_i,v_j)\cap \partial K = l(v_i,v_j)\cap \partial L$, where $l(v_i,v_j)$ is the line through $v_i$ and $v_j$. If $Q$ belongs to this line, we are done. If not, we will argue as follows.

Since $Q\in C_D(v_i)\cap\partial K\cap \partial L$, then $\varphi_i (Q)$ is also in $C_D(v_i)\cap\partial K\cap \partial L$. Let $\{F_m\}$ be the collection of the facets of $D$ that contain the vertex $v_i$, and let $\{n_m\}$ be collection of the corresponding outward unit normal vectors. Note that the condition $Q\in C_D(v_i)$ means that either $\langle \overrightarrow{v_i Q},n_m \rangle \ge 0$ for all $m$, or $\langle \overrightarrow{v_i Q},n_m \rangle \le 0$ for all $m$. Without loss of generality we can assume that $\langle \overrightarrow{v_i Q},n_m \rangle \ge 0$ for all $m$ (otherwise, take $\varphi_i(Q)$ instead of $Q$).

We claim that $Q\in C_D(v_i)\cap C_D(v_j)$. Indeed, the outward unit normal vectors to the facets that contain $v_j$ are $\{-n_m\}$. Thus,
$$\langle \overrightarrow{v_jQ},n_m\rangle=\langle \overrightarrow{v_iQ},n_m\rangle+\langle \overrightarrow{v_jv_i},n_m\rangle=\langle \overrightarrow{v_iQ},n_m\rangle+2\langle \overrightarrow{Ov_i},n_m\rangle\geq 0.$$
Next we claim that $\varphi_j (Q)\in  C_D(v_i)\cap C_D(v_j)$. It is clear that $\varphi_j (Q)\in    C_D(v_j)$. Thus, it is enough to show that $\langle \overrightarrow{v_i\varphi_j(Q)},n_m\rangle \le 0$ for all $m$. We have
$$\overrightarrow{v_i\varphi_j(Q)} = \overrightarrow{OQ}+\overrightarrow{Q\varphi_j(Q)} - \overrightarrow{Ov_i} = \overrightarrow{OQ}+\alpha \overrightarrow{Qv_j} - \overrightarrow{Ov_i},$$
where $\displaystyle \alpha = \frac{|Q\varphi_j(Q)|}{|Qv_j|} >1.$
So, $$\overrightarrow{v_i\varphi_j(Q)}= \overrightarrow{OQ}+\alpha \overrightarrow{Ov_j}-\alpha \overrightarrow{OQ} - \overrightarrow{Ov_i}=(1-\alpha) \overrightarrow{OQ} -(1+\alpha) \overrightarrow{Ov_i} =
(1-\alpha) \overrightarrow{v_i Q} - 2\alpha   \overrightarrow{Ov_i}.$$
Thus, for every $m$,
$$\langle \overrightarrow{v_i\varphi_j(Q)},n_m\rangle = (1-\alpha)\langle \overrightarrow{v_i Q},n_m\rangle  - 2\alpha \langle  \overrightarrow{Ov_i},n_m\rangle \le 0.$$
In a similar fashion one can show that
$\varphi_i(\varphi_j (Q))\in  C_D(v_i)\cap C_D(v_j)$. Thus we can produce a sequence of points $\{Q_k\}_{k=0}^\infty$, where $Q_0=Q$ and $Q_k=\varphi_i(\varphi_j (Q_{k-1}))$, and such that $Q_k\in C_D(v_i)\cap C_D(v_j)\cap \partial K\cap \partial L$ for all $k\ge 0$. Moreover, all these points belong to the 2-dimensional plane spanned by the points $Q$, $v_i$, and $v_j$.
As in Proposition \ref{p3.4} we have the corresponding sequence of angles $\theta_k = \angle(\overrightarrow{v_iQ_k},\overrightarrow{v_iv_j})$, with $\theta_k < \theta_{k-1}$. One can see that $\lim_{k\to \infty}\theta_k = 0$.
Since $Q_k\in   \partial K\cap \partial L$ for all $k$, we have proved that $l(v_i,v_j)\cap \partial K = l(v_i,v_j)\cap \partial L$.

Denote the points of intersection of the latter line with the boundaries of $K$ and $L$ by $X_0$ and $Y_0$, and consider any 2-dimensional plane $H$ through $X_0$ and $Y_0$. Using \cite[Lemma 7]{F}, we see that there are neighborhoods  $\mathcal{N}(X_0)$ and $ \mathcal{N}(Y_0)$ of $X_0$ and $Y_0$ correspondingly, such that
$$
H\cap\mathcal{N}(X_0)\cap \partial K=H\cap\mathcal{N}(X_0)\cap \partial L, \mbox{ and } H\cap\mathcal{N}(Y_0)\cap \partial K=H\cap\mathcal{N}(Y_0)\cap \partial L.
$$
If $P$ is a point in $C_D(v_i)\cap H$ that does not belong to $\mathcal{N}(X_0)$ or $\mathcal{N}(Y_0)$, then we apply $\varphi_j$ and $\varphi_i$ to produce a sequence of points $P_k$, which after finitely many steps will belong to $\mathcal{N}(X_0)$ or $ \mathcal{N}(Y_0)$. Thus, $P_N\in \partial K\cap \partial L$ for some large $N$. Applying inverse maps $\varphi_i^{-1}$ and $\varphi_j^{-1}$, we conclude that $P\in \partial K\cap \partial L$. Thus, we have shown that
$$
H\cap C_D(v_i)\cap \partial K=H\cap C_D(v_i)\cap \partial L.
$$
Since this is true for every $H$, we have $
  C_D(v_i)\cap \partial K=  C_D(v_i)\cap \partial L.
$

 Now consider any other vertex of $D$, say $v_m$, that is connected to $v_i$ by an edge. One can see that
$$
C_D(v_i)\cap C_D(v_m)\cap \partial K\cap \partial L\ne\emptyset.
$$
Repeating the same process as above, we   get
$$
C_D(v_m)\cap \partial K=C_D(v_m)\cap \partial L.
$$
Since we can do  this for every vertex, it follows that  $C_D(v_k)\cap \partial K=C_D(v_k)\cap \partial L$ for every $k$, and thus $K=L$.
}
\end{proof}

\begin{rem}\label{r2.15}
How to prove this in odd dimensions? Is there a different condition that guarantees a positive answer in odd dimensions? If we replace the equality of sections by the equality of derivatives of the parallel section functions, then, for example, in $\mathbb R^3$ first derivatives are not enough; cf. \cite[Remark 1]{KS}.
\end{rem}

{The next theorem is an analogue of Groemer's result for half-sections. The difference is that we look at half-sections that do not pass through the origin. We will adopt the following notation. For a   point $p\in \mathbb R^n$ and a vector $v\in S^{n-1}$, define $v_p^\perp = \{x\in \mathbb R^n: \langle x-p, v\rangle = 0\}$ and   $v_p^+ = \{x\in \mathbb R^n: \langle x-p, v\rangle \ge 0\}$.}
\begin{thm}\label{t2.16}
Let $K$ and $L$ be convex bodies in $\mathbb R^n$, $n\ge 3$, that contain a  strictly convex body $D$ in their interiors. Assume that $$\textnormal{vol}_{n-1}(K\cap H\cap v_p^+) = \textnormal{vol}_{n-1}(L\cap H\cap v_p^+),$$ for every hyperplane $H$ supporting $D$ and every unit vector $v\in H-p$, where $p=D\cap H$. Then $K=L$.
\end{thm}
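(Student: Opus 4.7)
The plan is to adapt Groemer's proof of his half-plane theorem to the present setting, where the boundary of each half-section moves with the supporting hyperplane. The key reduction is to use the hemispherical transform on each supporting hyperplane of $D$ to extract two independent pieces of information from the half-volume hypothesis.

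Fix $u\in S^{n-1}$ and let $p = p(u)\in\partial D$ be the unique contact point of the supporting hyperplane $H(u)$ (uniqueness follows from strict convexity of $D$). Introducing polar coordinates on $u^\perp$ centered at $p$, the hypothesis becomes
\[\frac{1}{n-1}\int_{u^\perp\cap v^+\cap S^{n-1}} \rho_{K,p(u)}^{n-1}(\theta)\,d\theta = \frac{1}{n-1}\int_{u^\perp\cap v^+\cap S^{n-1}} \rho_{L,p(u)}^{n-1}(\theta)\,d\theta\]
for every $v\in u^\perp\cap S^{n-1}$. This is precisely the equality of hemispherical transforms on the $(n-2)$-sphere $u^\perp\cap S^{n-1}$. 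Since the kernel of this transform consists of the even mean-zero functions, the hypothesis splits, via the $v\leftrightarrow -v$ symmetry, into the Barker--Larman equality
\[\mathrm{vol}_{n-1}(K\cap H(u)) = \mathrm{vol}_{n-1}(L\cap H(u))\]
and the odd-part equality
\[\rho_{K,p(u)}^{n-1}(\eta) - \rho_{K,p(u)}^{n-1}(-\eta) = \rho_{L,p(u)}^{n-1}(\eta) - \rho_{L,p(u)}^{n-1}(-\eta), \quad \eta\in u^\perp\cap S^{n-1}.\]
The latter is the higher-dimensional analog of condition (II) of Theorem~\ref{p3.4}.

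To conclude $K = L$, I would mimic Groemer's two-step recovery scheme. First, from the Barker--Larman equality as $u$ varies, use a translated version of Lemma~\ref{l2.13} based on Parseval's formula on the sphere to obtain the pointwise symmetric-part identity
\[\rho_{K,p(u)}^{n-1}(\eta) + \rho_{K,p(u)}^{n-1}(-\eta) = \rho_{L,p(u)}^{n-1}(\eta) + \rho_{L,p(u)}^{n-1}(-\eta).\]
Combined with the odd-part equality, this yields $\rho_{K,p(u)}(\pm\eta) = \rho_{L,p(u)}(\pm\eta)$ for every admissible pair $(u,\eta)$, hence $K\cap H(u) = L\cap H(u)$ for every supporting hyperplane of $D$. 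A boundary argument then finishes: every point $x\in\partial K\setminus D$ lies on some tangent hyperplane $H(u)$ to $D$, so $x\in K\cap H(u) = L\cap H(u)\subset L$; the reverse inclusion is symmetric, so $K = L$.

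The principal obstacle is the adaptation of Lemma~\ref{l2.13} to the moving reference point $p(u)$. In that lemma, the convex bodies contain a fixed origin through which all sections pass, so Parseval's identity on $S^{n-1}$ applies directly to the Fourier transform of $\|x\|_K^{-n+1}+\|-x\|_K^{-n+1}$. In the present theorem, the sections $K\cap H(u)$ are tangent to $D$ and the natural center $p(u)$ varies with $u$, so the Fourier-analytic identity must be reformulated for the family of translated bodies $K - p(u)$, $L - p(u)$, with uniformity as $u$ varies. Exploiting the smooth dependence $u\mapsto p(u)$ afforded by strict convexity of $D$, one should be able to establish a translated Parseval identity, after which the rest of the argument proceeds as outlined.
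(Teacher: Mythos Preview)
Your reduction on each fixed supporting hyperplane is correct: the hemispherical transform on $u^\perp\cap S^{n-1}$ recovers the odd part of $\rho^{n-1}_{\,\cdot\,,p(u)}$, so you obtain exactly the identity
\[
\rho_{K,p}^{n-1}(\eta)-\rho_{K,p}^{n-1}(-\eta)=\rho_{L,p}^{n-1}(\eta)-\rho_{L,p}^{n-1}(-\eta),\qquad \eta\in S^{n-1}\cap(H-p),
\]
which is the same first step as in the paper. You also correctly note that the hypothesis yields $\mathrm{vol}_{n-1}(K\cap H)=\mathrm{vol}_{n-1}(L\cap H)$ for every $H$ supporting $D$.

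The gap is in your second step. You propose to extract the \emph{even}-part equality from the relation $\mathrm{vol}_{n-1}(K\cap H(u))=\mathrm{vol}_{n-1}(L\cap H(u))$ by a ``translated'' version of Lemma~\ref{l2.13}. But that relation is precisely Problem~\ref{BL2}, which the paper states is open. The proposed adaptation cannot work for a structural reason: Lemma~\ref{l2.13} takes as input equality of \emph{central} sections of a single body through a whole neighborhood $E_\epsilon(\xi)$ of directions, and applies Parseval to the single distribution $\|x\|_K^{-n+1}+\|{-x}\|_K^{-n+1}$. In your setting, for each $u$ the section $K\cap H(u)$ is the central section of the \emph{different} translate $K-p(u)$ in the single direction $u$; there is no fixed function on $\mathbb R^n$ whose spherical Fourier transform encodes these moving-center sections, so Parseval on the sphere does not apply. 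Smooth dependence of $p(u)$ on $u$ does not rescue this, and indeed a positive conclusion here would resolve the Barker--Larman problem outright.

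The paper bypasses the even part entirely. From the odd-part identity alone it invokes \cite[Lemma~3]{BL}, which says: given that identity for all supporting hyperplanes of $D$, any common boundary point $Q\in\partial K\cap\partial L$ has a neighborhood in which $\partial K$ and $\partial L$ coincide. Since the half-section hypothesis forces $\partial K\cap\partial L\neq\emptyset$, the set $\partial K\cap\partial L$ is nonempty, open (by the lemma), and closed in $\partial K$, hence all of $\partial K$. The missing ingredient in your outline is this local propagation lemma, which replaces the unattainable even-part information.
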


\begin{proof}
Let us fix a supporting plane $H$ and consider the equality
$$
\mbox{vol}_{n-1}(K\cap H\cap v_p^+) = \mbox{vol}_{n-1}(L\cap H\cap v_p^+),
$$
for every unit vector $v\in H-p$.  Then \cite{Gr} implies that
$$
\rho^{n-1}_{K,p}(u)-\rho^{n-1}_{K,p}(-u)=\rho^{n-1}_{L,p}(u)-\rho^{n-1}_{L,p}(-u),
$$
for every  vector $u\in S^{n-1}\cap(H-p)$, where $p=D\cap H$.

Now observe that $\partial K\cap \partial L\ne \emptyset$; otherwise the condition $\mbox{vol}_{n-1}(K\cap H\cap v_p^+) = \mbox{vol}_{n-1}(L\cap H\cap v_p^+)$ would be violated.
{Moreover, if $Q\in\partial K\cap \partial L$, then by  \cite[Lemma 3]{BL}  there exists a neighborhood $\mathcal{N}(Q)$ of $Q$, such that  $\mathcal{N}(Q)\cap \partial K \subset \partial K\cap \partial L$.
Hence, $\partial K\cap \partial L$ is open in $\partial K$. On the other hand, by the continuity of the boundaries of $K$ and $L$, $\partial K\cap \partial L$ is closed in $\partial K$. Therefore,
}
$$
\partial K\cap \partial L=\partial K= \partial L.
$$
\end{proof}

\begin{cor}\label{c2.17}
Let $K$ be a convex body in $\mathbb R^n$, $n\ge 3$, that contains a ball $D$ of radius $t$ in its interior.  If $$\mathrm{vol}_{n-1}(K\cap \{\xi^\perp+t\xi\}\cap v^+) = \mbox{const},$$ for every $\xi\in S^{n-1}$ and every   vector $v\in S^{n-1}\cap  \xi^\perp$, then $K$ is a Euclidean ball.
\end{cor}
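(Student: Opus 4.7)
My plan is to derive the corollary as a direct consequence of Theorem \ref{t2.16} by exhibiting a Euclidean ball $L$ that matches $K$ on all the half-sections in question. Assume without loss of generality that $D$ is centered at the origin, so that for each $\xi\in S^{n-1}$ the hyperplane $H_\xi=\xi^\perp+t\xi$ supports $D$ at the single point $p=t\xi$, with $\xi$ being the outer unit normal.

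First I would let $c$ denote the common value of $\mathrm{vol}_{n-1}(K\cap H_\xi\cap v_p^+)$ given by the hypothesis. Since $D\subset\mathrm{int}(K)$, the point $p=t\xi$ lies in $\mathrm{int}(K)$, so $K\cap H_\xi$ contains a full $(n-1)$-dimensional ball around $p$ inside $H_\xi$. Hence every half-section $K\cap H_\xi\cap v_p^+$ has positive volume, giving $c>0$.

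Next I would choose $R>t$ such that $\tfrac{1}{2}\omega_{n-1}(R^2-t^2)^{(n-1)/2}=c$, where $\omega_{n-1}$ is the volume of the unit Euclidean ball in $\mathbb{R}^{n-1}$; this is possible because the left side is a continuous, strictly increasing, and surjective function of $R$ on $(t,\infty)$. Set $L=R\mathbb{B}^n$. Clearly $D\subset\mathrm{int}(L)$. For any $\xi\in S^{n-1}$, the origin is at distance $t$ from $H_\xi$ with foot of perpendicular equal to $p=t\xi$, so $L\cap H_\xi$ is a round $(n-1)$-dimensional ball in $H_\xi$ of radius $\sqrt{R^2-t^2}$ centered at $p$. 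For any $v\in S^{n-1}\cap\xi^\perp=S^{n-1}\cap(H_\xi-p)$, the half-space $v_p^+$ is bounded by a hyperplane through $p$; hence it slices $L\cap H_\xi$ into two congruent halves, and
\[
\mathrm{vol}_{n-1}(L\cap H_\xi\cap v_p^+)=\tfrac{1}{2}\omega_{n-1}(R^2-t^2)^{(n-1)/2}=c=\mathrm{vol}_{n-1}(K\cap H_\xi\cap v_p^+).
\]
Since $D$ is strictly convex and contained in the interior of both $K$ and $L$, Theorem \ref{t2.16} applies and yields $K=L$, which is a Euclidean ball.

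There is essentially no obstacle: the only point that requires a moment's thought is the radial symmetry of $L\cap H_\xi$ about $p$, which is immediate from $p$ being the foot of the perpendicular from the center of $L$ to $H_\xi$. The corollary is therefore just the specialization of Theorem \ref{t2.16} to $(K,L)$ where $L$ is the unique ball producing the prescribed constant half-section value.
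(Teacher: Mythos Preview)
Your proof is correct and is precisely the intended derivation: the paper states the corollary without proof immediately after Theorem~\ref{t2.16}, and your argument---choosing $L=R\mathbb{B}^n$ with $R>t$ so that the constant half-section volume is matched, then invoking Theorem~\ref{t2.16}---is exactly the natural way to fill in the details. The observation that $p=t\xi$ is the foot of the perpendicular from the center of $L$ to $H_\xi$, so that $L\cap H_\xi$ is a round ball centered at $p$ and every $v_p^+$ cuts it in half, is the only point requiring comment, and you handle it correctly.
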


In the next theorem we will consider a different type of half-sections.
\begin{thm}\label{t2.18}
{Let $K$ and $L$ be convex bodies in $\mathbb R^n$, $n\ge 3$, that contain a ball $D$ in their interiors. Assume that
$$
\textnormal{vol}_{n-1}(K\cap H^+\cap v^\perp) =\textnormal{vol}_{n-1}(L\cap H^+ \cap v^\perp)
$$
for every hyperplane $H$ supporting $D$ and every unit vector $v\in H-p$, where $p=D\cap H$. Then $K=L$. }
\end{thm}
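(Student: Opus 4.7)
The strategy is to reduce Theorem~\ref{t2.18} to Theorem~\ref{t2.16}: the plan is to show that the cap-slice hypothesis implies the same radial function identity that drives the proof of Theorem~\ref{t2.16}, namely
\[
\rho_{K,p}^{n-1}(u)-\rho_{K,p}^{n-1}(-u)=\rho_{L,p}^{n-1}(u)-\rho_{L,p}^{n-1}(-u),\qquad u\in\xi^\perp\cap S^{n-1},
\]
for every hyperplane $H$ supporting $D$ with outer normal $\xi$ and contact point $p=R\xi$ (here $D$ is taken to be the ball of radius $R$ centered at the origin, as the presence of $v^\perp$ in the hypothesis implicitly requires). Granted this identity at every contact point $p$, the rest of the argument is literally that of Theorem~\ref{t2.16}: a standard cap-volume comparison forces $\partial K\cap\partial L\ne\emptyset$, \cite[Lemma~3]{BL} then shows that this intersection is open in $\partial K$, and closedness under continuity together with connectedness of $\partial K$ yields $K=L$.

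To derive the identity, fix $\xi_0\in S^{n-1}$, $u_0\in\xi_0^\perp\cap S^{n-1}$, and choose $v\in\xi_0^\perp\cap u_0^\perp\cap S^{n-1}$ (nonempty when $n\ge 3$). Along the geodesic $\xi(s)=\cos s\,\xi_0+\sin s\,u_0$ one has $v\perp\xi(s)$, so if $H_s$ denotes the supporting hyperplane with outer normal $\xi(s)$, the hypothesis gives $\textnormal{vol}_{n-1}(K\cap H_s^+\cap v^\perp)=\textnormal{vol}_{n-1}(L\cap H_s^+\cap v^\perp)$ for all small $s$. Differentiating at $s=0$ and passing to polar coordinates in the $(n-2)$-dimensional affine subspace $H_0\cap v^\perp$ centered at $p_0=R\xi_0$ (with radial Jacobian $t^{n-3}$), the cancellation $p_0\cdot u_0=0$ reduces the identity to
\[
\int_{v^\perp\cap\xi_0^\perp\cap S^{n-1}}(\omega\cdot u_0)\,f(\omega)\,d\omega=0,\qquad f(\omega):=\rho_{K,p_0}^{n-1}(\omega)-\rho_{L,p_0}^{n-1}(\omega).
\]
Letting $u_0$ range over $v^\perp\cap\xi_0^\perp\cap S^{n-1}$ upgrades this to the vector identity $\int_{v^\perp\cap\xi_0^\perp\cap S^{n-1}}\omega\,f(\omega)\,d\omega=0$, valid for every $v\in\xi_0^\perp\cap S^{n-1}$.

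Writing $\omega_i:=\omega\cdot e_i$ for an orthonormal basis $(e_i)$ of $\xi_0^\perp$, the last identity says that each scalar function $\omega_i f$ on the $(n-2)$-sphere $S:=\xi_0^\perp\cap S^{n-1}$ has vanishing spherical Radon (Funk) transform. By injectivity of the Funk transform on even functions, $(\omega_i f)^{\mathrm{e}}=\omega_i f^{\mathrm{o}}\equiv0$ on $S$ for each $i$; since the coordinates $\omega_i$ cannot vanish simultaneously on $S$, we conclude $f^{\mathrm{o}}\equiv0$, which is exactly the target identity. The main obstacle is concentrated in this step: a single differentiation produces only first-moment information on the $(n-3)$-sphere $v^\perp\cap\xi_0^\perp\cap S^{n-1}$, which is far from pointwise when $n\ge 4$. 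The resolution is that allowing $v$ to sweep the whole equatorial sphere $\xi_0^\perp\cap S^{n-1}$ reassembles the family of first-moment equations into a Funk identity, and the classical Funk injectivity then delivers the pointwise oddness needed to invoke \cite[Lemma~3]{BL} and finish as in Theorem~\ref{t2.16}.
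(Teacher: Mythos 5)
Your proposal is correct and follows essentially the same approach as the paper's proof: differentiate the cap-volume equality with respect to the rotation of the supporting hyperplane to extract first-moment identities over the $(n-3)$-subspheres $v^\perp\cap\xi_0^\perp\cap S^{n-1}$, recognize the resulting family (as $v$ varies) as the vanishing of a spherical Radon transform on $\xi_0^\perp\cap S^{n-1}$, apply Funk injectivity on even functions to conclude $\rho_{K,p}^{n-1}(\omega)-\rho_{K,p}^{n-1}(-\omega)=\rho_{L,p}^{n-1}(\omega)-\rho_{L,p}^{n-1}(-\omega)$, and finish as in Theorem~\ref{t2.16}. The paper arrives at the same moment identity via the $E_1,\dots,E_4$ decomposition and the $(r,\theta)$ coordinates from Theorem~\ref{t2.7} together with a Fubini argument over the $W$-fibers, and it lifts $\zeta$ to arbitrary $\vartheta\in\xi^\perp$ before fixing $\vartheta$ and varying $v$ rather than packaging the moments as a single vector equation and taking components, but these are presentational rather than substantive differences.
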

\begin{proof}
{
Let us fix a unit vector $v$, and consider $\xi,\zeta\in S^{n-1}\cap v^\perp$ such that $\xi\perp\zeta$. For a small $\phi$ let $\eta = \cos\phi\, \xi + \sin\phi\, \zeta$. Without loss of generality we will assume that $D$ has radius 1 and is centered at the origin. Consider the affine hyperplanes $H_\xi = \{x\in\mathbb R^n: \langle x,\xi\rangle = 1\}$ and $H_\eta = \{x\in\mathbb R^n: \langle x,\eta\rangle = 1\}$.   Let the $(n-3)$-dimensional subspace $W$ be the orthogonal compliment of span$\{\xi,\zeta\}$ in $v^\perp$. Consider the orthogonal projection of the convex body $K\cap v^\perp$ onto the 2-dimensional subspace spanned by $\xi$ and $\zeta$. The picture is identical to Figure 4, with $E_1$, $E_2$, $E_3$, and $E_4$ defined similarly. If $n=3$, we repeat the argument from the proof of Theorem \ref{t2.7}. If $n\ge 4$, we will use the following modification of this argument.

Let $\bar E_i = E_i\times W$, for $i=1$, 2, 3, 4.
Then the equality
$$\mathrm{vol}_{n-1}(K\cap v^\perp \cap H_{\xi}^+) - \mathrm{vol}_{n-1}(K\cap v^\perp \cap H_{\eta}^+) =\mathrm{vol}_{n-1}(L\cap v^\perp \cap H_{\xi}^+) - \mathrm{vol}_{n-1}(L\cap v^\perp \cap H_{\eta}^+) $$
implies
\begin{multline}\label{difference}
\mbox{vol}_{n-1}(K\cap v^\perp\cap (\bar E_1\cup \bar E_4))-\mbox{vol}_{n-1}(K\cap v^\perp\cap (\bar E_3\cup \bar E_4))\\
 =\mbox{vol}_{n-1}(L\cap v^\perp\cap (\bar E_1\cup\bar E_4))-\mbox{vol}_{n-1}(L\cap v^\perp\cap (\bar E_3\cup\bar E_4)).
\end{multline}

For $x\in  \mathrm{span}\{\xi,\zeta\}$, consider the following parallel section function: $$A_{K\cap v^\perp,W} (x) = \mbox{vol}_{n-3}(K\cap v^\perp \cap \{W+x\}).$$
Then equation (\ref{difference}) and the Fubini theorem imply
$$\int_{E_1\cup E_4} A_{K\cap v^\perp,W} (x) dx - \int_{E_3\cup E_4} A_{K\cap v^\perp,W} (x) dx = \int_{E_1\cup E_4} A_{L\cap v^\perp,W} (x) dx - \int_{E_3\cup E_4} A_{L\cap v^\perp,W} (x) dx.$$
Now we will pass to new coordinates $(r,\theta)$ as in the proof of   Theorem \ref{t2.7}, by letting $x (r,\theta) = \cos\theta\, \xi + \sin\theta\, \zeta + r (\sin\theta\, \xi - \cos\theta\, \zeta)$.
Then
\begin{multline*}\int_0^\phi \int_0^\infty |r|  A_{K\cap v^\perp,W} (x(r,\theta)) dr d\theta - \int_0^\phi \int_{-\infty}^0 |r|  A_{K\cap v^\perp,W} (x(r,\theta))  dr d\theta\\
 =
\int_0^\phi \int_0^\infty |r|  A_{L\cap v^\perp,W} (x(r,\theta))  dr d\theta - \int_0^\phi \int_{-\infty}^0 |r|  A_{L\cap v^\perp,W} (x(r,\theta))  dr d\theta .
\end{multline*}
Differentiating with respect to $\phi$ and letting $\phi = 0$, we get
\begin{equation}\label{intA}
\int_{-\infty}^\infty r  A_{K\cap v^\perp,W} (x(r,0)) dr  = \int_{-\infty}^\infty r  A_{L\cap v^\perp,W} (x(r,0)) dr.
\end{equation}
Note that
$$A_{K\cap v^\perp,W} (x(r,0)) =A_{K\cap v^\perp,W} (\xi - r\zeta) =  A_{(K-\xi)\cap v^\perp,W} (  - r\zeta)  = \int_{x\in \xi^\perp\cap v^\perp \cap \{\langle x,\zeta\rangle = -r\}} \chi (\|x\|_{K-\xi}) dx. $$
Therefore, (\ref{intA}) and the Fubini theorem give
$$\int_{\xi^\perp\cap v^\perp} \langle x,\zeta\rangle  \chi (\|x\|_{K-\xi}) dx = \int_{\xi^\perp\cap v^\perp} \langle x,\zeta\rangle  \chi (\|x\|_{L-\xi}) dx .$$
Passing to polar coordinates in $\xi^\perp\cap v^\perp$, we get
$$\int_{S^{n-1}\cap \xi^\perp\cap v^\perp} \langle w,\zeta\rangle   \|w\|_{K-\xi}^{-n+1} dw = \int_{S^{n-1}\cap \xi^\perp\cap v^\perp} \langle w,\zeta\rangle   \|w\|_{L-\xi}^{-n+1}  dw .$$
Observe, that this is true for any $\zeta\in \xi^\perp\cap v^\perp$. Furthermore, for any vector $\vartheta \in \xi^\perp$ there is a vector $\zeta\in \xi^\perp\cap v^\perp$ and a number $\beta$ such that $\vartheta = \zeta + \beta v$. Therefore,  for every $\vartheta \in \xi^\perp$ we have
$$\int_{S^{n-1}\cap \xi^\perp\cap v^\perp} \langle w,\vartheta\rangle   \|w\|_{K-\xi}^{-n+1} dw = \int_{S^{n-1}\cap \xi^\perp\cap v^\perp} \langle w,\vartheta\rangle   \|w\|_{L-\xi}^{-n+1}  dw .$$
Fixing $\xi$ and $\vartheta$, and looking at all $v\in S^{n-1}\cap \xi^\perp$, we can consider the latter equality as the equality of the spherical Radon transforms on $S^{n-1}\cap \xi^\perp$. Since the spherical Radon transform only allows to reconstruct even parts, we get
$$  \langle w,\vartheta\rangle   \|w\|_{K-\xi}^{-n+1} +  \langle - w,\vartheta\rangle   \|-w\|_{K-\xi}^{-n+1} = \langle w,\vartheta\rangle   \|w\|_{L-\xi}^{-n+1} +  \langle - w,\vartheta\rangle   \|-w\|_{L-\xi}^{-n+1} ,$$
for all $w, \vartheta \in  S^{n-1}\cap \xi^\perp$. That is,
$$\|w\|_{K-\xi}^{-n+1} -   \|-w\|_{K-\xi}^{-n+1} =     \|w\|_{L-\xi}^{-n+1} - \|-w\|_{L-\xi}^{-n+1} , \mbox{ for all } w\in S^{n-1}\cap \xi^\perp.$$
We finish the proof as in Theorem \ref{t2.16}.
}
\end{proof}

Below we will prove an analogue of the result of Falconer and Gardner for halfspaces.  We will need the following lemma.

{
\begin{lem}\label{l3.9}
Suppose $i>0$. Let $K$ and $L$ be convex bodies in $\mathbb R^n$, $p_1$ and $p_2$ be distinct points in the interior of $K\cap L$, and $l$ be the line passing through $p_1$ and $p_2$. If for all $\xi\in {S}^{n-1}$,
 \begin{equation}\label{condition_l2.9}
\rho_{K,p_j}^{i}(\xi) - \rho_{K,p_j}^{i}(-\xi)=\rho_{L,p_j}^{i}(\xi) - \rho_{L,p_j}^{i}(-\xi),\mbox{ for } j=1,2,
 \end{equation} and $\partial K\cap \partial L \ne\emptyset,$ then $K=L$.
\end{lem}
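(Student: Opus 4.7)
The plan is to adapt the proof of Theorem \ref{p3.4}, specializing the admissible bodies $D_1,D_2$ to the points $p_1,p_2$. The degenerate case turns out to be strictly simpler: the Gardner-type measure used in Step 3 of that theorem becomes \emph{exactly} invariant under the analogs of $\varphi_1,\varphi_2$, eliminating the fudge factors $(1\pm C\sin\theta)^{|i-2|}$. For each $j=1,2$ and each $Q\ne p_j$ in a suitable neighborhood of $\partial K\cap \partial L$, define $\varphi_j(Q)$ as the point on the line through $Q$ and $p_j$, on the opposite side of $p_j$ from $Q$, such that
$$|Qp_j|^i-|\varphi_j(Q)p_j|^i=\rho_{K,p_j}^i(\xi)-\rho_{K,p_j}^i(-\xi),\qquad \xi=(Q-p_j)/|Q-p_j|.$$
By (\ref{condition_l2.9}), the same identity holds with $L$ in place of $K$, so each $\varphi_j$ is an involution preserving $\partial K$, $\partial L$, and hence $\partial K\cap \partial L$.

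I would first reduce to the planar case by picking $Q_0\in \partial K\cap \partial L$ and restricting to the $2$-plane $H_0$ through $Q_0,p_1,p_2$, which is preserved by $\varphi_1$ and $\varphi_2$. Once the planar case is proved, it yields $l\cap \partial K=l\cap \partial L=\{X_0,Y_0\}$; for every other $2$-plane $H\supset l$ one has $X_0,Y_0\in \partial K\cap \partial L\cap H$, so the planar case applied in $H$ gives $K\cap H=L\cap H$, and $K=L$ follows on varying $H$. For the planar case, place $l$ on the $x$-axis with $p_1$ at the origin and $p_2=(d,0)$, take $Q_0$ above $l$ (the case $Q_0\in l$ is reduced to this by applying $\varphi_1$ to a suitable auxiliary point), and iterate $Q_{k+1}=\varphi_2\varphi_1(Q_k)\in \partial K\cap \partial L$. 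The only fixed points of $\varphi_2\varphi_1$ lie on $l$, and a first-order linearization gives $\alpha_{k+1}\approx \tfrac{sr'}{(r'+d)(s+d)}\alpha_k$ for small $\alpha_k=\angle(\overrightarrow{p_1Q_k},\overrightarrow{p_1p_2})$ with $r',s$ bounded on $\partial K$, forcing $Q_k\to X_0\in l\cap \partial K\cap \partial L$; the backward orbit converges to $Y_0$.

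Let $E_k$ denote the component of $K\triangle L$ above $l$ bounded between $Q_k$ and $Q_{k+1}$, so that $E_{k+1}=\varphi_2(\varphi_1(E_k))$, and set $\nu_i(E)=\iint_E|y|^{i-2}\,dx\,dy$. The technical heart of the argument is the exact invariance $\nu_i(\varphi_j(E))=\nu_i(E)$: in polar coordinates $(r,\theta)$ centered at $p_j$, the map $\varphi_j$ sends $(r,\theta)\mapsto (r',\theta+\pi)$ with $(r')^i=r^i-\rho_{K,p_j}^i(\theta)+\rho_{K,p_j}^i(\theta+\pi)$ and Jacobian $(r/r')^{i-1}$; combining with $|y|^{i-2}\,dx\,dy=(r')^{i-1}|\sin\theta|^{i-2}\,dr'\,d\theta'$ on $\varphi_j(E)$, all $r'$ factors cancel and
$$\nu_i(\varphi_j(E))=\iint_E|\sin\theta|^{i-2}r^{i-1}\,dr\,d\theta=\nu_i(E).$$
Hence $\nu_i(E_k)=\nu_i(E_0)$ for every $k$. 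For $k\ge N$ large, the pairwise disjoint sets $E_k$ all fit inside a triangle $T$ with vertex at $X_0$ and one side on $l$, and $\nu_i(T)<\infty$ by \cite[Lemma 5.2.4]{Ga}. If $\partial K\ne \partial L$ between $Q_0$ and $Q_1$, then $E_0$ is a nonempty open set away from $l$, so $\nu_i(E_0)>0$ and $\sum_{k\ge N}\nu_i(E_k)=\infty>\nu_i(T)$, a contradiction. Therefore $\partial K=\partial L$ between $Q_0$ and $Q_1$; iterating gives equality on the whole upper arc from $Y_0$ to $X_0$, and the symmetric argument below $l$ yields $K=L$. The main obstacle is establishing this exact $\nu_i$-invariance under the point reflections $\varphi_j$, together with the contraction $Q_k\to X_0$; both become cleaner than in Theorem \ref{p3.4} precisely because the ``bodies'' have collapsed to points.
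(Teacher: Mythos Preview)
Your approach diverges from the paper's after the first step. Both arguments begin identically: define the involutions $\varphi_1,\varphi_2$, pick $Q_0\in\partial K\cap\partial L$, work in the $2$-plane through $Q_0,p_1,p_2$, and iterate $\varphi_2\circ\varphi_1$ to obtain $l\cap\partial K=l\cap\partial L=\{X_0,Y_0\}$. From there the paper does \emph{not} rerun a $\nu_i$-argument. Instead, in each $2$-plane $V\supset l$ it performs a symmetrization: writing $K\cap V=K_1\cup K_2$ and $L\cap V=L_1\cup L_2$ along $l$, it forms the star bodies $\tilde K=K_1\cup L_2$ and $\tilde L=K_2\cup L_1$, for which condition~(\ref{condition_l2.9}) becomes the ``plus'' condition $\rho_{\tilde K,p_j}^i(\xi)+\rho_{\tilde K,p_j}^i(-\xi)=\rho_{\tilde L,p_j}^i(\xi)+\rho_{\tilde L,p_j}^i(-\xi)$, and then invokes \cite[Theorem~6.2.3]{Ga} directly. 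This is considerably shorter than redoing the measure argument, and it sidesteps the issue below.

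Your observation that $\nu_i$ is \emph{exactly} invariant under each $\varphi_j$ when $D_j$ degenerates to a point is correct and elegant (and is essentially the content of \cite[Section~5.2]{Ga}). However, your reduction ``the planar case applied in $H$ gives $K\cap H=L\cap H$'' has a gap. In a $2$-plane $H\neq H_0$ the only points of $\partial K\cap\partial L\cap H$ you have produced are $X_0,Y_0\in l$, and starting your iteration from a point on $l$ yields the trivial orbit $\varphi_2\varphi_1(X_0)=X_0$. Your parenthetical ``the case $Q_0\in l$ is reduced to this by applying $\varphi_1$ to a suitable auxiliary point'' does not repair this: no auxiliary point of $\partial K\cap\partial L\cap H$ off $l$ is available, and your definition of the pieces $E_k$ as components of $K\triangle L$ between consecutive $Q_k\in\partial K\cap\partial L$ then collapses. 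Concretely, nothing you have written excludes the scenario $\partial K\cap\partial L\cap H=\{X_0,Y_0\}$ with $K\cap H\supsetneq L\cap H$ off $l$; in that case $(K\triangle L)\cap H$ is a single annular region, $\varphi_2\varphi_1$ is a $\nu_i$-preserving self-bijection of it, and no disjoint family with constant positive $\nu_i$-measure arises. The paper's half-swap trick is precisely what turns this bad configuration into one where $\tilde K$ and $\tilde L$ genuinely interlace, so that Gardner's two-point theorem applies. If you wish to keep your self-contained route, one way to close the gap is to establish local equality of $\partial K$ and $\partial L$ near $X_0$ in every $2$-plane through $l$ (in the spirit of \cite[Lemma~7]{F}, as the paper does in the proof of Theorem~\ref{t2.12}), and then propagate by iteration.
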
}
\begin{proof}
Our first goal is to prove that $\partial K\cap l = \partial L \cap l$. Let $Q_0 \in \partial K\cap \partial L$. If $Q_0\in l$, we are done. Otherwise, we define two maps $\varphi_1,\varphi_2$ as follows. If $Q$ is a point distinct from $p_1$, then $\varphi_1(Q)$ is defined to be the point on the line passing through $Q$ and $p_1$,  such that $p_1$ lies between $Q$ and $\varphi_1(Q)$ and
  $$
|Qp_1|^{i}-|p_1\varphi_1(Q)|^{i}=\rho_{K,p_1}^{i}(\xi) - \rho_{K,p_1}^{i}(-\xi),
$$
where $\displaystyle \xi = \frac{\overrightarrow{p_1Q}}{|p_1Q|}$.

Note that the domain of $\varphi_1$ contains the set $K\triangle L$. The map $\varphi_2$ is defined similarly with $p_1$ replaced by $p_2$.

 For the chosen point $Q_0\in \partial K\cap \partial L $ consider the $2$-dimensional plane $H$ passing through $Q_0$, $p_1$, and $p_2$.   Construct a sequence of points $\{Q_j\}\subset \partial K\cap \partial L\cap H$, satisfying  $Q_{j+1}=\varphi^{-1}_2(\varphi_1(Q_j))$, and a sequence of angles $\{\theta_j\}=\{\angle(\overrightarrow{Q_j\varphi_1(Q_j)},l)\}$. One can see that $\lim_{j\to \infty}\theta_j =0$, and therefore the limit
$$
X_0=\lim_{j\to\infty}Q_j
$$
is a point on $l\cap\partial K\cap\partial L$. The claim that $\partial K\cap l = \partial L \cap l$ is now proved.

Let $V$ be any $2$-dimensional affine subspace of $\mathbb R^n$ that  contains the line $l$. Consider the bodies $K\cap V$ and $L\cap V$ in $V$. The line $l$ cuts both these bodies in two parts, $K \cap V= K_1 \cup K_2$ and $L\cap V = L_1\cup L_2$, so that $K_1$ and $L_1$ are on the same side of $l$. Since $K\cap l= L\cap l$, the following star bodies are well-defined: $\tilde K = K_1\cup L_2$ and $\tilde L = K_2\cup L_1$.  Condition (\ref{condition_l2.9}) now implies
$$
\rho_{\tilde{K},p_j}^{i}(\xi) + \rho_{\tilde{K},p_j}^{i}(-\xi)=\rho_{\tilde{L},p_j}^{i}(\xi) + \rho_{\tilde{L},p_j}^{i}(-\xi),\mbox{ for } j=1,2.
$$
Now we can use \cite[Theorem~6.2.3]{Ga}  to show that $\tilde K = \tilde L$, implying that $K\cap V= L\cap V$. Since $V$ was an arbitrary affine subspace containing $l$, it follows that $K=L$.
\end{proof}
{
\begin{rem}  A version of this lemma  for a smaller set of values of $i$ (but without the assumption $\partial K\cap \partial L\ne\emptyset$) was proved by Koldobsky and Shane, \cite[Lemma 6]{KS}.
\end{rem}
}

With the help of Lemma \ref{l3.9} we obtain the following result.

\begin{thm}\label{t2.19}
Let   $K$ and $L$ be convex bodies in $\mathbb R^n$ containing two distinct points $p_1$ and $p_2$ in their interiors. If for  every $v\in S^{n-1}$, we have $$ \textnormal{vol}_{n}(K\cap v^+_{p_j}) =  \textnormal{vol}_{n}(L\cap   v^+_{p_j})\mbox{ for }j=1,2,$$ then $K=L$.
\end{thm}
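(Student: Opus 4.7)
The plan is to derive Theorem \ref{t2.19} from Lemma \ref{l3.9} with $i = n$. The main work is to extract, from the half-space volume hypothesis, the odd-radial equality
\begin{equation}\label{oddeq}
\rho_{K,p_j}^{n}(\xi) - \rho_{K,p_j}^{n}(-\xi) = \rho_{L,p_j}^{n}(\xi) - \rho_{L,p_j}^{n}(-\xi), \qquad \xi \in S^{n-1},
\end{equation}
for $j = 1,2$, and to verify that $\partial K \cap \partial L \ne \emptyset$; Lemma \ref{l3.9} then immediately yields $K = L$.

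The first step is the polar representation centred at $p_j$,
$$
\textnormal{vol}_n(K \cap v_{p_j}^{+}) = \frac{1}{n}\int_{\{u \in S^{n-1}\,:\,\langle u,v\rangle \ge 0\}}\rho_{K,p_j}^{n}(u)\,du,
$$
and analogously for $L$. Setting $g_j(u) = \rho_{K,p_j}^{n}(u) - \rho_{L,p_j}^{n}(u)$, the hypothesis becomes the statement that the hemispherical transform of $g_j$ vanishes identically on $S^{n-1}$. Taking the value at $v$ and $-v$ and subtracting, we see that the hemispherical transform of the odd part $g_j^{o}$ also vanishes for every $v$.

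To deduce (\ref{oddeq}), I would differentiate the identity $\int_{\{\langle u, v\rangle \ge 0\}} g_j^{o}(u)\,du \equiv 0$ along a curve $v(t) = (\cos t)v_0 + (\sin t)\eta$ with $\eta \perp v_0$. The moving boundary produces a delta measure on the equator and yields
$$
\int_{v_0^{\perp} \cap S^{n-1}} g_j^{o}(u)\,\langle u,\eta\rangle\,d\sigma(u) = 0
$$
for all admissible $v_0$ and $\eta$. Since $\langle u, v_0\rangle = 0$ on the equator, the orthogonality restriction on $\eta$ is cosmetic, and one may take $\eta$ to be any coordinate vector $e_i$. The integrand $g_j^{o}(u)\,u_i$ is even in $u$, so the Funk--Minkowski injectivity of the spherical Radon transform on even functions forces $u_i\, g_j^{o}(u) \equiv 0$ for each $i$. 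Since some coordinate of $u$ is nonzero at every $u \in S^{n-1}$, this gives $g_j^{o} \equiv 0$, which is exactly (\ref{oddeq}).

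Finally, $\partial K \cap \partial L \ne \emptyset$ holds automatically: if one body were strictly contained in the other, the half-space volume equality could not be satisfied even for a single hyperplane through $p_j$. With both hypotheses of Lemma \ref{l3.9} in place at $i = n$, we conclude $K = L$. The only point demanding care is the injectivity step; the Funk--Minkowski route above is direct, but one could equally well invoke the standard inversion of the hemispherical transform on odd functions (as in Groemer's monograph on spherical harmonics).
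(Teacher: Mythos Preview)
Your proposal is correct and follows the same route as the paper: establish the odd-radial identity \eqref{oddeq} for $j=1,2$, observe that $\partial K\cap\partial L\ne\emptyset$ (else one body lies strictly inside the other), and then invoke Lemma~\ref{l3.9} with $i=n$. The only difference is that the paper obtains \eqref{oddeq} by a one-line citation of Groemer's result on the hemispherical transform \cite{Gr}, whereas you reprove that injectivity directly via the differentiation-plus-Funk--Minkowski argument; your argument is correct and is in fact one of the standard proofs of the result you cite at the end.
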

\begin{proof} By   \cite{Gr}, we have $\rho_{K,p_j}^{n}(\xi)-\rho_{K,p_j}^{n}(-\xi)=\rho_{L,p_j}^{n}(\xi)-\rho_{L,p_j}^{n}(-\xi)$, for $j=1,2$, and every $\xi\in S^{n-1}$.
Also  observe that $ \partial K\cap \partial L \ne\emptyset$. Otherwise one of $K$ or $L$ would be strictly contained inside the other, which would contradict  the hypothesis of the theorem.
Now the result follows from Lemma \ref{l3.9}.
\end{proof}

Note that Problem \ref{BL1} is open even in the case of bodies of revolution when the center of the ball lies on the axis of revolution. However, if we consider a ball that does not intersect the axis of revolution, then the problem has a positive answer.

\begin{thm}\label{t2.20}
Let   $K$ and $L$ be convex bodies of revolution in $\mathbb R^n$ with the same axis of revolution. Let $D$ be a convex body in the interior of both $K$ and $L$ such that $D$ does not intersect the axis of revolution. If for every hyperplane $H$ supporting $D$ we have $$ \textnormal{vol}_{n-1}(K\cap H) = \textnormal{vol}_{n-1}(L\cap H  ),$$ then $K=L$.
\end{thm}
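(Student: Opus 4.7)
My plan is to reduce the theorem to a two-dimensional statement in a meridional plane and then invoke Theorem~\ref{main}. To set this up, let $\ell$ denote the common axis of revolution. Because $K$ and $L$ are invariant under all rotations about $\ell$, the section volume $\mathrm{vol}_{n-1}(K\cap H)$ depends on a hyperplane $H=\{x\in\R^n:\langle x,\xi\rangle=t\}$ only through the pair $(\xi_n,t)$, where $\xi_n$ is the component of $\xi$ along $\ell$ (with the origin placed on $\ell$). First I would apply the rotational symmetry of $K$ and $L$ to the hypothesis: for every hyperplane $H$ tangent to some rotate $R_\theta(D)$ of $D$ about $\ell$, the equality of section volumes persists. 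This yields equality of the common function of $(\xi_n,t)$ on the two-dimensional parameter region
\[
\Omega = \bigl\{(c,t)\in[-1,1]\times\R: t\in[m(c),M(c)]\bigr\},
\]
where $m(c)$ and $M(c)$ denote the minimum and maximum of $h_D(\xi)$ over $\xi\in S^{n-1}$ with $\xi_n=c$; crucially, because $D$ does not meet $\ell$, one has $m(c)<M(c)$ for $c\in(-1,1)$, so $\Omega$ has nonempty two-dimensional interior.

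Next I would fix a two-dimensional plane $W\subset\R^n$ containing $\ell$ and passing through the interior of $D$ (such a $W$ exists by rotational freedom), and consider the cross-sections $K_2=K\cap W$ and $L_2=L\cap W$; these are convex bodies in $W$ symmetric about $\ell$. Setting $D_W=D\cap W$ and letting $D_W'$ be its reflection across $\ell$ within $W$, the pair $(D_W,D_W')$ is admissible in the sense of Section~3: the only common supporting lines not separating them are the two lines perpendicular to $\ell$ tangent to $D_W\cup D_W'$ at its top and bottom, while $D_W\cup D_W'$ is non-convex since $D_W$ and $D_W'$ lie on opposite sides of $\ell$ (lower regularity of $D$ can be accommodated via Remark~\ref{Remark_smoothness}). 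My goal would then be to verify the chord-length hypothesis of Theorem~\ref{main}, namely $|K_2\cap\ell'|=|L_2\cap\ell'|$ for every line $\ell'\subset W$ tangent to $D_W$ or $D_W'$. Once this is established, Theorem~\ref{main} forces $K_2=L_2$, and the rotational symmetry of $K$ and $L$ then gives $K=L$.

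The hard part, which I expect to be the main obstacle, is the translation from the $(n-1)$-dimensional section volume hypothesis to the one-dimensional chord length equality. For a tangent line $\ell'$ to $D_W$ at $p\in\partial D\cap W$, the unique supporting hyperplane $H$ to $D$ at $p$ satisfies $H\cap W=\ell'$; by the rotational symmetry of $K$ about $\ell$, expressing $K\cap H$ as a fibration perpendicular to $W$ over $\ell'\cap K_2^\ast$ (where $K_2^\ast$ is the full meridian in the two-dimensional plane containing $\ell$ and $W$) yields the weighted integral representation
\[
\mathrm{vol}_{n-1}(K\cap H) = \omega_{n-2}\int_{\ell'\cap K_2^\ast}\bigl(r_K(z)^2-x^2\bigr)^{(n-2)/2}\,du,
\]
with $r_K$ the meridian radius, $(x,z)$ meridional coordinates of a point on $\ell'$, and $u$ arclength along $\ell'$. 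The hypothesis provides equality of these weighted integrals for $K$ and $L$, and I must extract from it equality of the unweighted chord lengths. To do so I would exploit the key observation that, for each fixed $(\xi_n,t)\in\Omega$, the entire family of hyperplanes in the rotational orbit (normals $\xi$ of given $\xi_n$ at distance $t$) passes through the common axial point $(0,\ldots,0,t/\xi_n)$; this reveals a two-parameter family of axial points at which many section volumes agree, reminiscent of the Falconer--Gardner setup. Combining this with an iterative boundary-tracing argument in the spirit of Theorems~\ref{p3.4} and~\ref{t2.12}, and with an Abel-type inversion of the weighted integral transform (well-posed because the weight vanishes to positive order at the chord endpoints), I expect to recover the required chord-length equality and close the reduction.
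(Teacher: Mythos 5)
Your reduction to a meridional plane and to Theorem~\ref{main} is a genuinely different route from the paper's, but it has a gap at precisely the step you flag as the main obstacle, and I do not see how to close it along the lines you indicate. In the weighted integral representation $\mathrm{vol}_{n-1}(K\cap H)=\omega_{n-2}\int_{\ell'}(r_K(z)^2-x^2)^{(n-2)/2}\,du$, the weight depends on the unknown meridian $r_K(z)$ over the entire range of heights swept by the chord, not merely on the chord endpoints. Equality of these integrals for $K$ and $L$ over all tangent lines is therefore a nonlinear constraint coupling the two unknown profiles $r_K$ and $r_L$; it is not an Abel transform of a single unknown function, and no inversion yields the unweighted chord-length equality that Theorem~\ref{main} needs as input. (If you already knew $r_K=r_L$, i.e.\ $K_2=L_2$, the integrands would agree --- which is exactly the circularity.) There is also a secondary obstruction: admissibility in Theorem~\ref{main}, even after Remark~\ref{Remark_smoothness}, requires $C^2$ support functions near the relevant contact points or a polygonal reference body, whereas the slice $D\cap W$ of a general convex body $D$ need satisfy neither, and Theorem~\ref{t2.20} places no smoothness hypothesis on $D$.

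The published proof is far shorter and is latent in your remark about common axial points. Let $p$ and $q$ be the points where the two supporting hyperplanes of $D$ perpendicular to the axis of revolution meet that axis. Because $D$ misses the axis and $p$, $q$ sit at the extreme heights of $D$, every hyperplane through $p$ (or through $q$) can be rotated about the axis until it touches $D$; and since $K$ and $L$ are bodies of revolution, rotating a hyperplane about the axis leaves both section volumes unchanged. The hypothesis therefore upgrades to $\mathrm{vol}_{n-1}(K\cap H)=\mathrm{vol}_{n-1}(L\cap H)$ for \emph{every} hyperplane $H$ through $p$ or through $q$, and the Falconer--Gardner theorem quoted in the introduction gives $K=L$ directly, with no passage to two dimensions, no chord lengths, and no regularity assumption on $D$.
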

\begin{proof}
Consider the two supporting hyperplanes of $D$ that are perpendicular to the axis of revolution. Let $p$ and $q$ be the points where these hyperplanes intersect the axis of revolution.

Note that every plane passing through $p$ (or $q$) can be rotated around the axis of revolution until it touches the body $D$. Due to the rotational symmetry of the bodies $K$ and $L$ we obtain that
 $$\mbox{vol}_{n-1}(K\cap (p+\xi^{\perp})) = \mbox{vol}_{n-1}(L\cap (p+\xi^{\perp})  ),$$ and $$\mbox{vol}_{n-1}(K\cap (q+\xi^{\perp})) = \mbox{vol}_{n-1}(L\cap (q+\xi^{\perp})  ),$$
for every $\xi\in S^{n-1}$.

The conclusion now follows from the corresponding result of  Falconer \cite{F} and Gardner \cite{Ga}, described in the introduction.
\end{proof}


\begin{thebibliography}{00}
\bibitem{BL}{\sc J.~A.~Barker, D.~G.~Larman}, {\it Determination of convex bodies by certain sets of sectional volumes,}  {Discrete Mathematics} {\bf 241} (2001), 79--96.
\bibitem{F} {\sc K.~Falconer}, {\em X-ray problems for point sources,} Proc.~London Math.~Soc.~(3)  {\bf 46} (1983), 241--262.
\bibitem{Ga1} {\sc R.~J.~Gardner}, {\em Symmetrals and X-rays of planar convex bodies}, Arch. Math. {\bf 41} (1983),
183--189.
\bibitem{Ga} {\sc R.~J.~Gardner}, {\em Geometric Tomography,} second edition, Cambridge University Press, New York, 2006.

\bibitem{GRYZ} {\sc R.~J.~Gardner, D.~Ryabogin, V.~Yaskin, and A.~Zvavitch}, {\it A problem of Klee on inner section functions of convex bodies},  J. Differential Geom. {\bf 91} (2012), 261--279.
\bibitem{Gr1} {\sc H.~Groemer},{\em Geometric Applications of Fourier Series and Spherical Harmonics,} Cambridge University Press, 1996.
\bibitem{Gr} {\sc H.~Groemer}, {\em On a spherical integral transformation and sections of star bodies,} Mh.~Math.~{\bf 126} (1998), 117--124.
\bibitem{GYY} {\sc P.~Goodey, V.~Yaskin, and M.~Yaskina}, {\em Fourier transforms and the Funk-Hecke theorem in convex geometry,}  J. London Math. Soc.~(2) {\bf 80} (2009), 388--404.
 \bibitem{KS}{\sc A.~Koldobsky and C.~Shane}, {\em The determination of convex bodies from derivatives of section
functions,} Arch.~Math.~{\bf 88} (2007), 279--288.

\bibitem{K}{\sc A.~Koldobsky}, {\em Fourier Analysis in Convex Geometry,}  Mathematical Surveys and Monographs, American Mathematical
Society, Providence RI, 2005.

\bibitem{NRZ0} {\sc F.~Nazarov, D.~Ryabogin, and A.~Zvavitch}, {\it On the local equatorial characterization of zonoids and intersection bodies}, Adv. Math. {\bf 217} (2008),  1368--1380.

\bibitem{NRZ} {\sc F.~Nazarov, D.~Ryabogin, and A.~Zvavitch}, {\it An asymmetric convex body with maximal sections of constant volume}, Journal of Amer. Math. Soc. {\bf 27} (2014),  43--68.


\bibitem{Sa} {\sc L.~A.~Santal\'{o}},
{\em Two characteristic properties of circles on a spherical surface}
(Spanish), Math. Notae {\bf 11} (1951) 73--78.
\bibitem{Sch}{\sc R.~Schneider},
{\em Convex Bodies: the Brunn-Minkowski Theory}, Cambridge University Press, Cambridge, 1993.

\bibitem{Y} {\sc V.~Yaskin}, {\em Unique determination of convex polytopes by non-central sections}, Math. Ann. {\bf 349} (2011), 647--655.
\end{thebibliography}
\end{document}